\newcommand{\axname}[1]{\ensuremath{\textit{#1}}}
\newcommand{\SA}{\axname{Signs}}
\newcommand{\sg}{\operatorname{{\mathbf s}}}
\newcommand{\IL}{\axname{IL}}
\newcommand{\Md}{\axname{Md}}
\newcommand{\CC}{\axname{CC}}
\newcommand{\Ril}{\axname{RIL}}
\newcommand{\CEFR}{\axname{CEFR}}
\newcommand{\EFR}{\axname{EFR}}
\newcommand{\AEFR}{\axname{AEFR}}
\newcommand{\SO}{\axname{SO}}
\newcommand{\CR}{\axname{CR}}
\newcommand{\OF}{\axname{OF}}
\newcommand{\CO}{\axname{C\,$0$}}
\newcommand{\SSAV}{\axname{SSAV}}
\newcommand{\Nat}{\ensuremath{\mathbb N}}
\newtheorem{theorem}{Theorem}[section]
\newtheorem{lemma}[theorem]{Lemma}  
\newtheorem{proposition}[theorem]{Proposition}  
\newtheorem{corollary}[theorem]{Corollary}  
\newtheorem*{rev}{Proposition~\ref{efr} (revisited)}  
\newtheorem{remark}[theorem]{Remark}  
\newtheorem{definition}[theorem]{Definition}  
\begin{document}
\begin{frontmatter}
\title{Equations for formally real meadows }

\author[jan]{Jan A.\ Bergstra}
\ead{	J.A.Bergstra@uva.nl}
	
\author[jan]{Inge Bethke\corref{cor1}}
\ead{I.Bethke@uva.nl}

\author[jan]{Alban Ponse}
\ead{A.Ponse@uva.nl}	
	
\cortext[cor1]{Corresponding author}	

\address[jan]{Section Theory of Computer Science, Informatics Institute, University of Amsterdam, The Netherlands}

%\date{}

%\begin{document}

%\maketitle

\begin{abstract}
We consider the signatures  $\Sigma_m=(0,1,-,+, \cdot, \ ^{-1})$ of 
meadows  and $(\Sigma_m, \sg)$ of signed meadows. We give two complete 
axiomatizations of the equational theories of the real numbers with respect to these 
signatures.  In the first case, we extend the axiomatization of zero-totalized fields 
by a single axiom scheme expressing formal realness; the second axiomatization  
presupposes an ordering. 
We apply these completeness results in order to obtain complete axiomatizations of the complex numbers.
\end{abstract}
\begin{keyword}
formally real meadow,
signed meadow,
real numbers, complex numbers,
completeness theorems.\\
\emph{Classification codes:} MSC: 12D15, ACM: F.4.1; F.1.2
\end{keyword}
\end{frontmatter}

\section{Introduction}

The signature $\Sigma_f=(0,1,-, +,\cdot)$ of \emph{fields} has two constants 0 and 
1, a unary function $-$, and two binary functions $+$ and $\cdot$. The first-order 
theory of fields is given by the axioms of commutative rings 
(see Table~\ref{commring}) and two additional axioms, namely
\begin{eqnarray*}
	0&\neq & 1,\\
	x\neq 0   &\rightarrow &\exists y \ x\cdot y=1.
\end{eqnarray*}

\begin{table}[htbp]
\centering
\hrule
\begin{align*}
\qquad(x+y)+z &= x + (y + z)\qquad\\
	x+y     &= y+x\\
	x+0     &= x\\
	 x + (-x)   &= 0\\
	(x \cdot y) \cdot  z &= x \cdot  (y \cdot  z)\\
	x \cdot  y &= y \cdot  x\\
	1\cdot x &= x \\
	x \cdot  (y + z) &= x \cdot  y + x \cdot  z
	\end{align*}
\hrule
\caption{The set  $\CR$ of axioms for commutative rings}
\label{commring}
\end{table}

A field $F$ is said to be \emph{ordered} if there  exists  a subset 
$F^{>0}\subseteq F$|the set of positive elements in $F$|such that $F^{>0}$ is 
closed under addition and multiplication, and $F$ is the disjoint union of 
$F^{>0}$, $\{0\}$, and $\{-a\mid a\in F^{>0}\}$. Then $F$ is totally ordered if we 
define $a>b$ to mean $a-b\in F^{>0}$. Moreover, if $a>b$, then $a+c >b+c$ for every 
$c$ and $a\cdot  c>b\cdot  c$ for every $c\in F^{>0}$. 
The theory of  ordered fields is formulated over the signature 
$\Sigma_{\textit{of}}=(0,1,-, +,\cdot, <)$. It has all the field axioms and, in 
addition, the axioms for  a total ordering  that is compatible with the field 
operations given in Table~\ref{orderedfield}. 

In 1927, the theory of ordered fields grew into the Artin-Schreier theory of 
ordered fields and formally real fields. 
\begin{definition}\label{formalreal}
A field $F$ is called \emph{formally real} if $-1$ is not a sum of squares in $F$.
\end{definition}
A main result of the Artin-Schreier theory  (see e.g.~\cite{J80}) states:

\begin{proposition}\label{sum_of_squares}
Let $F$ be an arbitrary field. $F$ is formally real if and only if for all 
$n\geq0$ and all $x_0, \ldots , x_n\in F$ we have 
\[
 \Sigma_{i=0}^{n} x^2_i = 0 \Rightarrow x_0= \cdots =x_n=0.
\]
\end{proposition}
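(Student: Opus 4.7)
The statement is an ``iff'', so the plan is to prove the two directions separately, using the field axioms (in particular that $0 \neq 1$ and that every nonzero element has a multiplicative inverse) as the only tools beyond Definition~\ref{formalreal}.

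For the easy direction ($\Leftarrow$), I would argue by contraposition. Assume $-1$ is a sum of squares in $F$, say $-1 = y_1^2 + \cdots + y_n^2$. Rewriting this as
\[
1^2 + y_1^2 + \cdots + y_n^2 \;=\; 0,
\]
we obtain a sum of squares equal to $0$ in which at least one summand, namely $1^2$, comes from the nonzero element $1$. This violates the implication in the statement, since the hypothesis $0 \neq 1$ forces $1$ to be a nonzero counterexample.

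For the harder direction ($\Rightarrow$), I would also argue by contraposition. Suppose there exist $n \ge 0$ and $x_0, \dots, x_n \in F$ with $\sum_{i=0}^n x_i^2 = 0$ and some $x_j \neq 0$. Since $F$ is a field, $x_j^{-1}$ exists; multiplying the equation by $(x_j^{-1})^2$ yields
\[
\sum_{i=0}^n \bigl(x_i \cdot x_j^{-1}\bigr)^2 \;=\; 0,
\]
where the $j$-th summand equals $1$. Isolating it gives
\[
-1 \;=\; \sum_{i \neq j} \bigl(x_i \cdot x_j^{-1}\bigr)^2,
\]
so $-1$ is exhibited as a sum of squares in $F$, contradicting formal realness.

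\textbf{Main obstacle.} There is no real difficulty once one knows to normalize by the inverse of a nonzero $x_j$; the only subtle point is that this step genuinely uses that $F$ is a \emph{field} (not merely a commutative ring), which is why the statement is formulated for fields. Everything else is a direct rearrangement using the ring axioms and the fact that $0 \neq 1$.
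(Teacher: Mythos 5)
Your proof is correct, and it is the standard argument: the paper itself gives no proof of this proposition, citing it as a known result of Artin--Schreier theory (to Jacobson's \emph{Basic Algebra II}), so there is nothing in the text to diverge from. Both directions are handled properly; the only point worth a half-sentence in a write-up is the degenerate case where the index $j$ is the sole index (so the exhibited sum $\sum_{i\neq j}$ is empty), but that case is vacuous anyway since a field has no nonzero $x_0$ with $x_0^2=0$.
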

Formally real fields can therefore be axiomatized by the following infinite list of 
axioms, one for each $n\geq 0$,
\[
\forall x_0 \forall x_1 \cdots \forall x_n ( x_0\cdot x_0 + \cdots  + x_n \cdot x_n 
= 0 \rightarrow (x_0 = 0\  \wedge\ \cdots \ \wedge\ x_n = 0)).
\]
A formally real field has no defined order relation. However, it is always possible 
to find an ordering (and often more) that will change a formally real field into an 
ordered field. One can view a formally real field as an ordered field  where the 
ordering is not explicitly given. The fields of rational numbers $\mathbb{Q}$ and of 
real numbers $\mathbb{R}$ are examples.

\begin{table}[htbp]
\centering
\hrule
\begin{align*}
\label{OF1}
\tag{OF1}
x\neq 0 &\rightarrow( x<0 \ \vee\ 0<x )&  \\
x < y& \rightarrow\neg ( y < x \ \vee\ x=y)& \tag{OF2}\label{OF2}\\
x<y & \rightarrow  x+z < y+z &\tag{OF3}\label{OF3}\\
x<y \ \wedge\ 0<z & \rightarrow  x\cdot z <y \cdot z&\tag{OF4}\label{OF4}\
\end{align*}
\hrule
\caption{The set  $\OF$ of axioms  for ordered fields}
\label{orderedfield}
\end{table}

Since the signature of fields does not  include a multiplicative inverse, the axiom 
for the inverse is  not universal, and therefore a substructure of a field closed 
under multiplication is not always a field. This can be remedied by adding a unary 
inverse operation $\ ^{-1}$ to the language.
In~\cite{BT07} \emph{meadows} were defined as members of a variety specified by 
equations. A {meadow} is a commutative ring equipped with
a total unary operation $\ ^{-1}$
named \emph{inverse} that satisfies $0^{-1}=0$. Every field $F$ can be expanded to a 
meadow (or \emph{zero-totalized field}) $F_0$ after making  the inverse operator 
total by $0^{-1}=0$. Thus $\mathbb{Q}_0$, $\mathbb{R}_0$ and $\mathbb{C}_0$ are 
meadows|the meadows of the rational, real and complex numbers, respectively.

An advantage of meadows over working with the signature of fields is that it 
facilitates formal reasoning without requiring the use of
either a logic of partial operations or a three valued logic. We will exploit this 
advantage and prove two completeness results the statement and meaning of which are 
accessible to all people who have been exposed to real numbers in elementary 
mathematics. We will view the real numbers as a formally real meadow 
which can be equipped with an ordering that is encoded in a sign function.
We will prove that all valid equations over the meadow of reals are derivable from 
the axioms of meadows 
plus an axiom scheme expressing formal realness; the valid equations over the signed 
meadow of the reals follow from the axioms of meadows together with the axioms for 
the sign function.

The remainder of this paper is organized as follows. The next section comprises 
preliminaries and a digression on meadows of characteristic 0.
In Section~\ref{sec:3}  we 
introduce formally real meadows and provide two  axiomatizations with accompanying 
completeness results.
Our first completeness result is
\begin{equation}
\Md +\EFR\vdash s=t \quad \text{ if and only if } \quad\mathbb{R}_0\models 
s=t \tag{\dag}
\end{equation}
where $\Md$ is a finite equational axiomatization of meadows and $\EFR$ is an 
equational axiom scheme expressing formal realness. We obtain $(\dag)$ as an 
immediate consequence of the Artin-Schreier Theorem and Tarski's theorem on 
quantifier elimination for real closed fields.
Moreover, we introduce signed meadows and give a finite axiomatization of formally 
real meadows expanded by a sign function. Our second completeness result with 
respect to formally real meadows is 
\begin{equation}
\Md +\SA\vdash s=t \quad\text{ if and only if } \quad (\mathbb{R}_0,\sg)\models 
s=t \tag{\ddag}
\end{equation}
with $\SA$ a finite set of axioms for the sign function $\sg$. 
Also $(\ddag)$ relies on both the Artin-Schreier Theorem and Tarski's theorem.
In the last section, we apply these completeness results and obtain complete axiomatizations of the meadow of complex numbers.

\section{Preliminaries}
\label{sec:2}
In~\cite{BT07} \emph{meadows}
were defined as the members of a variety specified by twelve equations. 
However, in~\cite{BHT}
it was established that the ten equations in Table~\ref{Md}
imply those used in~\cite{BT07}. Summarizing,
a {meadow} is a commutative ring with unit equipped with
a total unary operation $\ ^{-1}$
named \emph{inverse} that satisfies the two equations
\begin{align*}
  (x^{-1})^{-1} &= x, \\
  x\cdot(x \cdot x^{-1}) &= x.& \tag{\Ril}
\end{align*}
Here \Ril\ abbreviates 
\emph{Restricted Inverse Law}.  We write \Md\ for the set of
axioms in Table~\ref{Md} and write $\Sigma_m=(\Sigma_f, ^{-1})$ for the signature 
of meadows. From the axioms in \Md\ the following identities are derivable:
\begin{align*}
	0^{-1}  &= 0,\\
	(-x)^{-1} &= -(x^{-1}),\\
	(x \cdot  y)^{-1} &= x^{-1} \cdot  y^{-1},
\\
0\cdot x  &= 0,\\
	x\cdot -y &= -(x\cdot y),\\
	-(-x)     &= x.
\end{align*}

\begin{table}[t]
\centering
\hrule
\begin{align*}
	(x+y)+z &= x + (y + z)\\
	x+y     &= y+x\\
	x+0     &= x\\
	x+(-x)  &= 0\\
	(x \cdot y) \cdot  z &= x \cdot  (y \cdot  z)\\
	x \cdot  y &= y \cdot  x\\
	1\cdot x &= x \\
	x \cdot  (y + z) &= x \cdot  y + x \cdot  z\\
	(x^{-1})^{-1} &= x \\
	x \cdot (x \cdot x^{-1}) &= x
\end{align*}
\hrule
\caption{The set \Md\ of axioms for meadows}
\label{Md}
\end{table}

\begin{table}[b]
\centering
\hrule
\begin{align*}
x-y&=x+(-y)\notag\\
1_x&=x\cdot x^{-1} \notag\\
0_x&=1-1_x \notag
\end{align*}
\hrule
\caption{The derived operators subtraction, pseudo ones and pseudo zeros}
\label{derived}
\end{table}

We often use the derived operators \emph{subtraction}, \emph{pseudo ones} and 
\emph{pseudo zeros} given in Table~\ref{derived}.
Pseudo constants enjoy a couple of nice properties which are listed in the appendix. 
The most prominent are $0_0 = 1_1 = 1$, $0_1 = 1_0 = 0$ and 
\[
0_t + 1_t = 1 \text{ and } 0_t \cdot 1_t = 0.
\]
for all terms $t$.
In the remainder we shall tacitly assume that a meadow has subtraction and pseudo 
constants.
Moreover, we freely use numerals $\underline{n}$|defined by $\underline{0}=0$, 
$\underline{1}=1$ and $\underline{n+1}=\underline{n} + 1$ for $n\geq 1$|and 
exponentiation with constant integer exponents. 

The term \emph{cancellation meadow} is introduced in~\cite{BPvdZ} 
for a zero-totalized field that satisfies the so-called 
``cancellation axiom"
\[
x \neq 0 ~\wedge~ x\cdot y = x\cdot z ~\rightarrow~ y=z.\]
An equivalent version of the cancellation axiom that we shall
further use in this paper is the
\emph{Inverse Law} (\IL), i.e., the conditional axiom
\begin{align*}
  x\neq 0 ~\rightarrow~ x\cdot x^{-1}=1.
 \tag{\IL}
\end{align*}
So \IL\ states that there are no proper zero divisors. 
(Another equivalent formulation of the cancellation property is 
$x\cdot y=0~\rightarrow~x=0\ \vee\ y=0$.) Paradigm cancellation meadows are 
$\mathbb{Q}_0$, $\mathbb{R}_0$ and $\mathbb{C}_0$. However, there also exist meadows 
with proper zero devisors and infinite meadows with characteristic $0$
different from $\mathbb{Q}_0$, $\mathbb{R}_0$ and $\mathbb{C}_0$.  For example, 
in~\cite{BRS09} it is proved that $\mathbb{Z}/n\mathbb{Z}$ with elements 
$\{0, 1, \ldots , n-1\}$ where arithmetic is performed modulo $n$ is a meadow if 
$n$ is squarefree, i.e.\
$n$ is the product of pairwise distinct primes. Thus $\mathbb{Z}/10\mathbb{Z}$ is a 
meadow where $2\neq 0\neq  5$ but $2\cdot 5=0$. The existence of an infinite 
non-cancellation meadow is shown in the following theorem.

\begin{theorem}
There exists a non-cancellation meadow $M$ of characteristic 0 which does not
have $\mathbb{Q}_0$ as a subalgebra.
\end{theorem}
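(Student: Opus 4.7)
The plan is to construct $M$ as a direct product of two meadows, exploiting
the fact that meadows form a variety (Table~\ref{Md}) and are therefore
closed under direct products. Concretely, I would set
$M := \mathbb{Q}_0 \times \mathbb{Z}/2\mathbb{Z}$, where $\mathbb{Z}/2\mathbb{Z}$
is the finite meadow mentioned in the discussion preceding the theorem.
Since $\mathbb{Q}_0$ is infinite, so is $M$.

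Characteristic $0$ and failure of cancellation are immediate. In $M$ the
numeral $\underline{n}$ is $(\underline{n},\underline{n}\bmod 2)$, whose
first coordinate is nonzero for every $n\geq 1$ because $\mathbb{Q}_0$ has
characteristic $0$; hence $M$ has characteristic $0$ as well. For
non-cancellation, $(1,0)$ and $(0,1)$ are both nonzero in $M$, yet
$(1,0)\cdot(0,1)=(0,0)=0$, so \IL\ fails.

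The substantive step is to show that no subalgebra of $M$ is isomorphic to
$\mathbb{Q}_0$. The plan is as follows. Any meadow subalgebra of $M$ must
contain the \emph{prime subalgebra}, i.e.\ the closure of $\{0,1\}$ under
the operations of $\Sigma_m$. Applying the pseudo-one construction to
$\underline{2}=(2,0)\in M$ yields
$1_{(2,0)} = (2,0)\cdot(2,0)^{-1} = (2,0)\cdot(1/2,0) = (1,0)$, and
therefore $0_{(2,0)} = 1 - 1_{(2,0)} = (0,1)$. Both elements lie in the
prime subalgebra, and their product is $0$. Consequently every subalgebra of
$M$ contains proper zero divisors, while $\mathbb{Q}_0$ has none; hence no
subalgebra of $M$ can be isomorphic to $\mathbb{Q}_0$.

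The main (mild) obstacle is spotting that the pseudo-constants provide the
desired idempotent ``projections'' inside the product: taking the pseudo-one
of a numeral that collapses to $0$ in the finite factor but remains invertible
in the infinite factor immediately produces a zero divisor already in the
prime subalgebra. With this observation in hand, all remaining steps are
concrete componentwise computations, and the argument goes through equally
well with $\mathbb{Z}/n\mathbb{Z}$ for any squarefree $n\geq 2$ in place of
$\mathbb{Z}/2\mathbb{Z}$.
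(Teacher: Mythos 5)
Your construction is correct, but it takes a genuinely different route from the paper's. The paper does not exhibit an explicit model: for each $k$ it forms a finite theory $E_k$ containing $a\neq 0$, $\underline{n}\neq 0$ for $0<n<k$, $\underline{2}\cdot a=0$ and $\Md$, checks that $\mathbb{Z}/2p\mathbb{Z}$ (with $p>k$ prime and $a\mapsto p$) is a model, and invokes the compactness theorem to get a model $M$ of $\bigcup_k E_k$; the failure of $\underline{2}\cdot\underline{2}^{-1}=1$ in that $M$ then blocks any embedding of $\mathbb{Q}_0$. You instead take the explicit direct product $\mathbb{Q}_0\times\mathbb{Z}/2\mathbb{Z}$, legitimate because $\Md$ is purely equational, and observe that the pseudo-constants $1_{\underline{2}}=(1,0)$ and $0_{\underline{2}}=(0,1)$ are proper zero divisors lying already in the prime subalgebra, so no subalgebra of $M$ can be isomorphic to the zero-divisor-free $\mathbb{Q}_0$. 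Your argument is more elementary (no compactness, a concrete witness) and proves slightly more, namely that no subalgebra of $M$ is even an integral domain; the paper's argument is essentially an ultraproduct-style limit of the finite meadows it has just discussed and showcases that model-theoretic technique. One point worth making explicit in your write-up: ``characteristic $0$'' must here be read as $\underline{n}\neq 0$ for all $n\geq 1$, exactly as in the paper's own proof, and not as the axioms $\CO$ of Table~\ref{CO}, since your $M$ (like the paper's) falsifies $1_{\underline{2}}=1$.
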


\begin{proof}
Choose a new constant symbol $a$. For $k\in \mathbb{N}$ let
\[
E_k = \{a \neq 0\} \cup \{\underline{n}\neq 0 \mid n \in \mathbb{N}, 0 < n < k\} 
\cup  \{\underline{2}\cdot a = 0\} \cup \Md.
\]
Moreover, choose a prime $p \neq 2$ exceeding $k$ and interpret $a$ in 
$\mathbb{Z}/2p\mathbb{Z}$ by $p$. Then $\mathbb{Z}/2p\mathbb{Z}\models E_k$.
It follows that $E_k$ is consistent and therefore $E = \bigcup_{k=1}^\infty E_k$ 
is consistent by the compactness
theorem. Let $M$ be a model for $E$. Then
\begin{enumerate}
\item $M$ is a meadow, since $\Md \subseteq E$,
\item $M$ has characteristic 0, since $M \models \underline{n}\neq 0$ for all 
$n\in \mathbb{N}$ with $n \neq 0$,
\item $M$ is not a cancellation meadow, since $M\models  \underline{2} \neq 0$, 
$M \models a \neq  0$, but $M \models \underline{2}\cdot a = 0$, and
\item $M \models \underline{2}\cdot \underline{2}^{-1}\neq  1$, for otherwise
\[
a = 1 \cdot a =  \underline{2}\cdot \underline{2}^{-1} \cdot a 
= \underline{2}^{-1}\cdot \underline{2} \cdot a= \underline{2}^{-1} \cdot 0 = 0;
\]
hence  $\mathbb{Q}_0$ is not a subalgebra of $M$.
\end{enumerate}
\end{proof}

In~\cite{BBP2013}, we proved a finite basis result for the equational theory of 
cancellation meadows. This result is formulated in a generic way so that it can be 
applied to any expansion of a meadow that satisfies the propagation properties 
defined below.

\begin{definition}
Let $\Sigma$ be an extension of $\Sigma_{m}$ and  $E\supseteq \Md$.
\begin{enumerate}
\item
$(\Sigma,E)$ has the \emph{propagation property for pseudo ones} if for
each pair of $\Sigma$-terms $t,r$ and context $C[~]$,
\[E\vdash 1_t\cdot C[r]=1_t\cdot C[1_t\cdot r].\]
\item
$(\Sigma,E)$ has the \emph{propagation property for pseudo zeros} if for
each pair of $\Sigma$-terms $t,r$ and context $C[~]$,
\[E\vdash 0_t\cdot C[r]=0_t\cdot C[0_t\cdot r].\]
\end{enumerate}
\end{definition}
Preservation of these propagation properties admits the following 
nice result:
\begin{theorem}
[Generic Basis Theorem for Cancellation Meadows]
\label{st}
If $\Sigma\supseteq \Sigma_{m}$, $E\supseteq \Md $ and $(\Sigma,E)$ 
has the
pseudo one propagation property and the pseudo zero propagation property,
then 
\[
E\vdash s=t \quad\text{ if and only if }\quad E + \IL \models s=t
\]
for all $s,t\in \Sigma$.
\end{theorem}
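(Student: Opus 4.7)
The plan is to prove both directions of the biconditional. Soundness, namely $E\vdash s=t \Rightarrow E+\IL\models s=t$, is immediate: every $E$-derivation is sound in every $E$-model, and a fortiori in every model of $E+\IL$. The content of the theorem is the converse, which I would attack by a syntactic case split via the pseudo-constants, using the propagation hypotheses to absorb the conditional axiom $\IL$ into purely equational reasoning.

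Assume $E+\IL\models s=t$, and let $x_1,\ldots,x_n$ list the variables of $s,t$. Recalling the derivable identities $0_x+1_x=1$, $0_x\cdot 1_x=0$, $0_x\cdot x=0$ and $1_x\cdot x=x$, I would first multiply out
\[
1 \;=\; \prod_{i=1}^{n}(0_{x_i}+1_{x_i}) \;=\; \sum_{I\subseteq\{1,\ldots,n\}} e_I, \qquad e_I \;=\; \prod_{i\in I} 1_{x_i}\cdot\prod_{j\notin I} 0_{x_j},
\]
a decomposition of unity into pairwise orthogonal idempotents ($e_I\cdot e_J=0$ for $I\neq J$). Multiplying both sides of the target equation by $1$ and distributing, it suffices to derive $E\vdash e_I\cdot s=e_I\cdot t$ for every $I$. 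Now fix $I$. For each $j\notin I$, the pseudo-zero propagation property applied with $r=x_j$ together with $0_{x_j}\cdot x_j=0$ yields $E\vdash 0_{x_j}\cdot C[x_j]=0_{x_j}\cdot C[0]$ for any context $C[-]$; iterating over $j\notin I$ reduces the problem to deriving $E\vdash e_I\cdot s^I=e_I\cdot t^I$, where $s^I,t^I$ arise from $s,t$ by substituting $0$ for each $x_j$ with $j\notin I$.

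The main obstacle is this reduced case, in which the semantic use of $\IL$ on the variables $\{x_i:i\in I\}$ must be converted into a syntactic derivation from $E$. My plan is to invoke Birkhoff completeness and verify $e_I\cdot s^I=e_I\cdot t^I$ in every meadow $M\models E$ under every valuation $\nu$. If $\nu(x_i)=0$ for some $i\in I$, then $1_{x_i}[\nu]=0$, so $e_I[\nu]=0$ and both sides collapse to $0$. Otherwise $e_I[\nu]=\prod_{i\in I} 1_{\nu(x_i)}$ is an idempotent that acts as the multiplicative unit on the image $e_I[\nu]\cdot M$, and within this image each $\nu(x_i)$ has a genuine multiplicative inverse (so $\IL$ holds locally). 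The pseudo-one propagation property is precisely what guarantees that the passage to this image respects the whole of $E$, not merely $\Md$, so that the image can be viewed as a cancellation meadow model of $E$. The hypothesis $E+\IL\models s=t$ then yields $s^I[\nu]=t^I[\nu]$ in the image, i.e., $e_I[\nu]\cdot s^I[\nu]=e_I[\nu]\cdot t^I[\nu]$ in $M$, closing the argument. The delicate point|and the one where the propagation assumptions do all the real work|is the verification that the localization by the idempotent $e_I[\nu]$ actually preserves the extended axioms $E$, which is exactly the content the two propagation properties were designed to provide.
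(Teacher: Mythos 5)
The first half of your argument is fine: the decomposition of $1$ into the pairwise orthogonal idempotents $e_I$ is derivable in $\Md$, and the pseudo-zero propagation property does let you replace each occurrence of $x_j$ ($j\notin I$) by $0$ under the prefix $e_I$, reducing the problem to $E\vdash e_I\cdot s^I=e_I\cdot t^I$. The gap is in the final step. To verify $E\models e_I\cdot s^I=e_I\cdot t^I$ you must consider an \emph{arbitrary} model $M$ of $E$, and your justification in case (b)---that $\IL$ ``holds locally'' in $e_I[\nu]\cdot M$ because each $e_I[\nu]\cdot\nu(x_i)$ is invertible there---conflates invertibility of the finitely many designated elements with the axiom \IL, which quantifies over \emph{all} elements of the algebra. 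The localization by the idempotent $e_I[\nu]$ is in general not a cancellation meadow, so the hypothesis $E+\IL\models s=t$ simply cannot be applied to it; the propagation properties guarantee that the localization is again a model of $E$, but not of $\IL$. Concretely, take $E=\Md$, $M=\mathbb{Z}/10\mathbb{Z}$ (a meadow, as recalled in Section~\ref{sec:2}) and $\nu(x_i)=1$ for all $i$; then $e_{\{1,\dots,n\}}[\nu]=1$ and the ``image'' is $M$ itself, which contains the nonzero noninvertible elements $2$ and $5$. Your argument at that point amounts to asserting that $M$ is a model of $E+\IL$, which is false.

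What is missing is a structural bridge from models of $E$ to models of $E+\IL$. One correct route: show, using \emph{both} propagation properties, that for any element $a$ of a model $M$ of $E$ the relations $\theta_0=\{(x,y):0_a\cdot x=0_a\cdot y\}$ and $\theta_1=\{(x,y):1_a\cdot x=1_a\cdot y\}$ are congruences of the entire $\Sigma$-algebra (propagation is exactly what makes them respect the operations in $\Sigma\setminus\Sigma_m$) with $\theta_0\cap\theta_1$ the diagonal, and that both are nontrivial whenever $a\neq 0$ and $a\cdot a^{-1}\neq 1$. Hence every subdirectly irreducible model of $E$ satisfies \IL, so by Birkhoff's subdirect representation theorem every model of $E$ is a subdirect product of models of $E+\IL$, giving $E+\IL\models s=t\Rightarrow E\models s=t$; Birkhoff completeness then finishes the proof with no case split on variables needed at all. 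Note finally that the paper does not prove this theorem itself but imports it as Theorem~3.1 of~\cite{BBP2013}, so there is no in-text proof to compare against; measured on its own terms, your attempt as written does not close.
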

\begin{proof} 
This is Theorem 3.1 of~\cite{BBP2013}.
\end{proof}

Meadow terms can be represented in a particular standard way. 
\begin{definition}
A term $P$ over $\Sigma_m$ is a \emph{Standard Meadow Form
(SMF)} if, for some $n\in\Nat$, $P$ is an \emph{SMF of level $n$}.
SMFs of level $n$ are defined as follows:
\begin{description}
%\\[1mm]
%\item[\textit{SMF of level $\mathbf 0$}] each expression of the form $s\cdot t^{-1}$ 
\item SMF of level $0$: each expression of the form $s\cdot t^{-1}$ 
with  $s$ and $t$ $\Sigma_f$-terms,
%\\[.6mm]
%\item[\textit{SMF of level $\mathbf {n+1}$}] each expression of the form
\item SMF of level ${n+1}$: each expression of the form
\[0_t\cdot P+1_t\cdot Q\]
with  $t$  a $\Sigma_f$-term and $P$ and $Q$  SMFs of 
level $n$.
\end{description}
\end{definition}
\begin{theorem}
\label{SMF}
For each term $s$ over $\Sigma_{m}$ there exists  an SMF $s'$
with the same variables such that  $\Md\vdash s=s'$.
\end{theorem}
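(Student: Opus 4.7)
I would prove the theorem by structural induction on $s$, with the variable set preserved at every step. The base cases are immediate: every variable and both constants $0, 1$ are $\Sigma_f$-terms, and any $\Sigma_f$-term $r$ equals $r \cdot 1^{-1}$ modulo $\Md$, a level-$0$ SMF. For the inductive step, by IH the immediate subterms of $s$ have SMFs, so it remains to show that the class of SMFs is closed modulo $\Md$ under each of $-$, $+$, $\cdot$, $\ ^{-1}$. These closure lemmas are themselves proved by induction on the level(s) of the SMFs involved, and SMFs of differing levels can always be lifted to a common level via $P = (0_r + 1_r) \cdot P = 0_r \cdot P + 1_r \cdot P$ for any fresh $\Sigma_f$-term $r$.

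Closure under $-$ and $\cdot$ is routine. At level $0$ one uses the derived identities $-(s \cdot t^{-1}) = (-s) \cdot t^{-1}$ and $(s_1 \cdot t_1^{-1})(s_2 \cdot t_2^{-1}) = (s_1 s_2)(t_1 t_2)^{-1}$, which lift to higher levels by distributing over the $0_t \cdot P + 1_t \cdot Q$ construction and collapsing cross-terms via $0_t \cdot 1_t = 0$ together with the idempotences $0_t^2 = 0_t$ and $1_t^2 = 1_t$. Closure under inverse at level $0$ uses $(s \cdot t^{-1})^{-1} = s^{-1} \cdot t$. At level $n+1$, writing $E = 0_t \cdot P + 1_t \cdot Q$, I would establish
\[
E^{-1} = 0_t \cdot P^{-1} + 1_t \cdot Q^{-1}
\]
by starting from $E^{-1} = (0_t + 1_t) \cdot E^{-1}$ and simplifying each piece: $0_t \cdot E^{-1} = 0_t^{-1} \cdot E^{-1} = (0_t \cdot E)^{-1} = (0_t \cdot P)^{-1} = 0_t \cdot P^{-1}$, using $0_t^{-1} = 0_t$ (derivable from the idempotence $0_t^2 = 0_t$ together with \Ril), $(xy)^{-1} = x^{-1} y^{-1}$, and $0_t \cdot 1_t = 0$; the $1_t$-piece is symmetric. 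By the IH on level, $P^{-1}$ and $Q^{-1}$ then admit SMFs, and the result is an SMF of one level higher.

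The main obstacle is closure under addition. Even summing two level-$0$ SMFs cannot stay at level $0$: the naive common-denominator formula $(s_1 t_2 + s_2 t_1)(t_1 t_2)^{-1}$ is \emph{not} $\Md$-equal to $s_1 \cdot t_1^{-1} + s_2 \cdot t_2^{-1}$, because the totalisation $0^{-1} = 0$ makes it fail whenever one of $t_1, t_2$ vanishes. I would get around this by an iterated case-split on the $\Sigma_f$-denominators using $0_t \cdot t^{-1} = 0$ and $1_t \cdot t^{-1} = t^{-1}$:
\[
s_1 t_1^{-1} + s_2 t_2^{-1} = 0_{t_1} \cdot (s_2 t_2^{-1}) + 1_{t_1} \cdot (0_{t_2} \cdot (s_1 t_1^{-1}) + 1_{t_2} \cdot ((s_1 t_2 + s_2 t_1)(t_1 t_2)^{-1})),
\]
which is a level-$2$ SMF. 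The key verification is that inside the innermost branch, where both $1_{t_1}$ and $1_{t_2}$ have been multiplied in, the common-denominator formula is $\Md$-correct, via $(s_1 t_2 + s_2 t_1)(t_1 t_2)^{-1} = 1_{t_2} \cdot s_1 \cdot t_1^{-1} + 1_{t_1} \cdot s_2 \cdot t_2^{-1}$ (using $t \cdot t^{-1} = 1_t$ and $(xy)^{-1} = x^{-1} y^{-1}$). The general case (SMFs of arbitrary levels) follows by a joint induction on the two levels, repeatedly splitting on the outer $\Sigma_f$-denominator of each summand and using identities such as $0_t \cdot (0_t \cdot P + 1_t \cdot Q) = 0_t \cdot P$ and its $1_t$-analogue to simplify within each branch.
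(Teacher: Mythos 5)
The paper does not prove this theorem itself---it simply cites Theorem~2.1 of~\cite{BBP2013}---so your proposal is best judged as a self-contained reconstruction, and as such it is correct and follows the natural (and, in essence, the cited) route: structural induction on $s$, reduced to closure of the class of SMFs modulo $\Md$ under $-$, $\cdot$, $^{-1}$ and $+$, with the only real work in the inverse and addition cases. Your key identities all check out: $0_t^{-1}=0_t$ and $1_t^{-1}=1_t$ do follow from idempotence together with \Ril, so $E^{-1}=0_t\cdot P^{-1}+1_t\cdot Q^{-1}$ is $\Md$-derivable; and the guarded common-denominator identity for addition is correct, since $0_{t_1}\cdot t_1^{-1}=0$ and $1_{t_1}\cdot 1_{t_2}\cdot(s_1t_2+s_2t_1)\cdot(t_1t_2)^{-1}=1_{t_1}\cdot 1_{t_2}\cdot(s_1\cdot t_1^{-1}+s_2\cdot t_2^{-1})$ are both consequences of \Ril{} and $(x\cdot y)^{-1}=x^{-1}\cdot y^{-1}$. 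Two small points should be tightened. First, the level-padding step $P=0_r\cdot P+1_r\cdot P$ must not use a genuinely \emph{fresh} variable in $r$, or the ``same variables'' clause of the theorem is violated; take $r\equiv 1$ (or any $\Sigma_f$-subterm already present) instead. Second, your displayed sum is not literally a level-$2$ SMF as written, because its first branch $s_2\cdot t_2^{-1}$ has level $0$ while the second has level $1$; this is repaired by exactly the padding trick you already introduced. Neither point is a gap in the argument, only in its bookkeeping.
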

\begin{proof}
This is Theorem 2.1 of~\cite{BBP2013}.
\end{proof}

It follows that every  meadow equation has a first-order representation over the 
signature of  fields. Since we will apply the first-order representation solely in 
the context of cancellation meadows we may freely use  $\IL$.
\begin{corollary}\label{phi}
For each equation $s=t$ over $\Sigma_{m}$ there exists a quantifier-free first-order
formula $\phi(s,t)$ over $\Sigma_f$ with the same variables such that 
\[\Md + \IL\vdash s=t \leftrightarrow \phi(s,t).\]
\end{corollary}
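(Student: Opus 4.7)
The plan is to reduce the corollary to Theorem~\ref{SMF} and then unwind the case-analysis structure that is built into the standard meadow forms.

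First I would view $s-t$ as a $\Sigma_m$-term (via the derived operator $x-y=x+(-y)$) and apply Theorem~\ref{SMF} to obtain an SMF $P$, having the same variables as $s$ and $t$, with $\Md\vdash s-t=P$. Since $\Md$ contains all commutative-ring axioms, $\Md\vdash s=t\leftrightarrow P=0$; hence it suffices to produce, for each SMF $P$, a quantifier-free $\Sigma_f$-formula $\psi_P$ in the variables of $P$ with $\Md+\IL\vdash P=0\leftrightarrow\psi_P$, and then to set $\phi(s,t):=\psi_P$.

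I would define $\psi_P$ by induction on the level of $P$. In the base case $P=u\cdot v^{-1}$, where $u,v$ are $\Sigma_f$-terms, I use $\IL$: if $v\neq 0$ then $v\cdot v^{-1}=1$, so $P=0$ forces $u=0$; if $v=0$ then the derived identity $0^{-1}=0$ already yields $P=0$. Hence $\psi_P\equiv(v=0\vee u=0)$. In the step case $P=0_t\cdot P_1+1_t\cdot P_2$, the standard evaluations of the pseudo-constants, namely $\Md+\IL\vdash t\neq 0\rightarrow(1_t=1\wedge 0_t=0)$ and $\Md\vdash t=0\rightarrow(1_t=0\wedge 0_t=1)$, collapse $P$ to $P_2$ respectively $P_1$, so I take $\psi_P\equiv(t=0\wedge\psi_{P_1})\vee(t\neq 0\wedge\psi_{P_2})$ and invoke the induction hypothesis.

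The main obstacle is the base case, where $\IL$ is indispensable: with $\Md$ alone one has only the Restricted Inverse Law $x\cdot(x\cdot x^{-1})=x$, which cannot rule out proper zero divisors, so $u\cdot v^{-1}=0$ need not entail $u=0$ even when $v\neq 0$ (witness $\mathbb{Z}/10\mathbb{Z}$ from the preceding theorem). The inductive step is routine propositional bookkeeping once the derived identities $1_t\cdot P_2=P_2$ and $0_t\cdot P_1=0$ under the assumption $t\neq 0$, together with their duals under $t=0$, are at hand.
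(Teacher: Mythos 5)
Your proof is correct, and it takes a slightly different decomposition from the one in the paper. The paper keeps both sides of the equation: it normalizes $s$ and $t$ separately into SMFs and runs a double induction on the pair of levels, which forces a comparatively awkward base case where two level-$0$ forms $s_1\cdot s_2^{-1}$ and $t_1\cdot t_2^{-1}$ must be compared directly (yielding the three-conjunct formula involving $s_1\cdot t_2=t_1\cdot s_2$ and the side conditions on $s_2$, $t_2$). You instead pass to the single term $s-t$, apply Theorem~\ref{SMF} once, and reduce everything to deciding $P=0$ for one SMF $P$ by a single induction on its level. This buys a much cleaner base case, since under $\Md+\IL$ one has $u\cdot v^{-1}=0\leftrightarrow(u=0\vee v=0)$, and your justification of both directions (using $0^{-1}=0$ and $0\cdot x=0$ when $v=0$ or $u=0$, and cancellation via $u=u\cdot(v\cdot v^{-1})=P\cdot v$ when $v\neq 0$) is sound. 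The inductive step is identical in spirit to the paper's: the dichotomy $t'=0$ versus $t'\neq 0$ collapses $0_{t'}\cdot P_1+1_{t'}\cdot P_2$ to $P_1$ or $P_2$ respectively, exactly as in cases 1(b) and 2 of the paper's proof. The only price of your route is the initial appeal to ring reasoning to justify $\Md\vdash s=t\leftrightarrow s-t=0$ and a fresh invocation of Theorem~\ref{SMF} on the composite term $s-t$; both are unproblematic, and the variable condition is preserved since the SMF of $s-t$ has the same variables as $s$ and $t$ together.
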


\begin{proof}
By the preceding theorem we may assume that both $s$ and $t$ are SMF's. We employ 
induction on the levels $n,m$ of $s$ and $t$, respectively.
\begin{enumerate}
\item 
$n=0$: Then there are terms $s_1, s_2$ over $\Sigma_{\textit{f}}$ such that 
$s=s_1\cdot s_2^{-1}$. 
\begin{enumerate}
\item $m=0$: Then there are terms $t_1, t_2$ over $\Sigma_f$ such that 
$t=t_1 \cdot t_2^{-1}$. Since $s=t$ we have $s_2\cdot t_2\cdot (s_1
\cdot t_2 - t_1\cdot s_2) =0$ and hence $s_2=0$ or $t_2=0$ or $s_1\cdot t_2 =
t_1\cdot s_2$. Thus we can take 
\begin{eqnarray*}
\phi(s,t)&\equiv &(s_2=0 \rightarrow (t_1=0 \ \vee\ t_2=0))\  \wedge\ \\
&& (t_2=0 \rightarrow (s_1=0 \ \vee\ s_2=0)) \ \wedge\ \\
&&(s_2\cdot t_2\neq 0 \rightarrow s_1\cdot t_2=t_1\cdot s_2).
\end{eqnarray*}
\item $m=k+1$: Then $t= 0_{t'}\cdot P + 1_{t'}\cdot Q$ for some 
$\Sigma_f$ term $t'$ and SMFs of level $k$. Observe that if $t'=0$, then 
$1_{t'}=0$ and $0_{t'}=1$ and therefore $s=P$. Likewise, if $t'\neq 0$, then $1_{t'}
=1$ and $0_{t'}=0$ and therefore $s=Q$. Thus we can take
\[
\phi(s,t)\equiv (t'=0 \rightarrow \phi(s,P)) \ \wedge\ (t'\neq 0 
\rightarrow \phi(s,Q)).
\]
\end{enumerate}
\item $n=l+1$: Then $s= 0_{s'}\cdot P + 1_{s'}\cdot Q$ for some 
$\Sigma_f$-term $s'$ and SMFs $P, Q$ of level $k$. 
Now we argue as in case 1(b) and take
\[
\phi(s,t)\equiv (s'=0 \rightarrow \phi(P,t)) \ \wedge\ (s'\neq 0 
\rightarrow \phi(Q,t)).
\]
\end{enumerate}
\end{proof}

\begin{table}
\centering
\hrule
\begin{align*}
	1_{\underline{n+1}}&=1&\tag{C0$_n$}
\end{align*}
\hrule
\caption{The set of axioms  $\CO$ for meadows of characteristic 0}
\label{CO}
\end{table}

We  use pseudo ones to give an infinite axiomatization of meadows of characteristic 
0 in Table~\ref{CO}.  As a corollary to Theorem~\ref{st} we have:

\begin{corollary}\label{basis0}
Let $s,t\in \Sigma_{m}$. Then
\[
\Md  +\CO \vdash s=t \quad\text{ if and only if } \quad \Md  +\CO + \IL \models s=t.
\]
\end{corollary}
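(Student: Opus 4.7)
The plan is to read this as an immediate consequence of Theorem~\ref{st} applied with $\Sigma = \Sigma_m$ and $E = \Md + \CO$. The containments $\Sigma \supseteq \Sigma_m$ and $E \supseteq \Md$ are trivial, so everything rests on verifying that $(\Sigma_m, \Md + \CO)$ has both the pseudo one and pseudo zero propagation properties.

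For the pseudo one propagation, I would show by induction on the context $C[~]$ that $\Md \vdash 1_t \cdot C[r] = 1_t \cdot C[1_t \cdot r]$. The base case $C[~] = [~]$ reduces to the idempotency $1_t \cdot 1_t = 1_t$, which follows from \Ril\ together with the definition $1_t = t \cdot t^{-1}$ and is among the appendix properties of pseudo constants. The inductive step pushes $1_t$ through each meadow operation: the cases for $+$, $-$ and $\cdot$ are immediate from commutativity, distributivity and the idempotency just noted, while the inverse case uses the derived identity $(x \cdot y)^{-1} = x^{-1} \cdot y^{-1}$ listed after Table~\ref{Md}. The pseudo zero propagation is handled analogously, now exploiting $0_t + 1_t = 1$ and $0_t \cdot 1_t = 0$.

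Because these derivations live entirely inside \Md, they remain valid after enlarging to $\Md + \CO$, and because $\CO$ introduces no new operation symbols, no further cases arise in the induction over contexts. Thus the hypotheses of Theorem~\ref{st} are met, and its conclusion is exactly the biconditional claimed in the corollary.

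The only candidate for an obstacle is establishing the propagation properties for pure meadows, but this is in effect part of the preparatory work in~\cite{BBP2013} that underlies Theorem~\ref{st} and consists of routine equational reasoning. The genuine content of the corollary is simply the observation that the characteristic zero scheme $\CO$ is a harmless equational enrichment which cannot disturb the propagation hypotheses of the generic basis theorem.
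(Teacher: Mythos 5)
Your proposal is correct and follows essentially the same route as the paper: both reduce the corollary to Theorem~\ref{st} by verifying that $(\Sigma_m,\Md+\CO)$ inherits the propagation properties from $(\Sigma_m,\Md)$ (the paper cites Corollary~3.1 of~\cite{BBP2013} for the latter, where you sketch the context induction directly), and both rest on the observation that $\CO$ adds only equations over the existing signature and so cannot disturb those hypotheses.
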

\begin{proof}
Since ($\Md$, $\Sigma_{m}$) has the propagation properties (see Corollary 3.1 
of~\cite{BBP2013}), the derived operators also share these properties. 
We therefore can apply Theorem~\ref{st}.
\end{proof}

An \emph{extension field} $\widehat{F}$ of a field $F$ is any field containing 
$F$ as a subalgebra. 
%An element $x$ of $\widehat{F}$ is said to be 
%\emph{algebraic over $F$} if there exists a non-zero polynomial $P$ with 
%coefficients from  $F$ such that $P(x)=0$. 
An extension field $\widehat{F}$ of 
$F$ is said to be \emph{algebraically closed} if every non-trivial polynomial has a root in $F$. 
Every field has an algebraic closure. In 1951, Tarski~\cite{T51} proved that the 
theory of algebraically closed fields in the first-order language over the signature 
$(0,1, -, +, \cdot )$ admits elimination of quantifiers. The most important model 
theoretic consequences hereof are the completeness of the theory of algebraically 
closed fields of a given characteristic|in particular, the  theory of algebraically 
closed fields of characteristic 0 coincides with the theory of the complex numbers 
over the signature $(0,1,-,  +, \cdot )$|and its decidability 
(see e.g.~\cite{CK90, P00}).

A reduct of an algebraic structure is obtained by omitting some of the operations 
and relations of that structure. The converse of a reduct is an expansion. In the 
sequel, we will write $M|\Sigma'$ for the reduct of the $\Sigma$-algebra $M$ to 
$\Sigma'\subseteq \Sigma$ and $(M,\diamond)$ for the expansion of $M$ by an 
operation or relation $\diamond$. In particular, we will write $M|\Sigma_f$ 
for the reduct of the meadow $M$ to the signature of fields, and $(M,\sg)$ and 
$(M,<)$ for the expansion of the meadow $M$ with a sign function and order 
relation, respectively.

\begin{theorem}\label{complete0}
Let $s,t$ be $\Sigma_{m}$-terms. Then 
\[
\Md + \CO\vdash s=t \quad  \text{ if and only if } \quad
\mathbb{C}_0\models s=t.
\]
\end{theorem}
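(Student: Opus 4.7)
The plan is to argue soundness directly and to reduce completeness to two earlier results in the paper: Corollary~\ref{basis0} (to replace provability by semantic consequence over cancellation meadows) and Corollary~\ref{phi} (to translate meadow equations into quantifier-free first-order formulas over the field signature), and then to invoke Tarski's theorem.

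For soundness, I would observe that $\mathbb{C}_0$ is a meadow, so it models $\Md$, and since it has characteristic $0$ every numeral $\underline{n+1}$ is nonzero, hence $1_{\underline{n+1}} = \underline{n+1}\cdot\underline{n+1}^{-1} = 1$, giving $\mathbb{C}_0\models\CO$. Thus $\Md+\CO\vdash s=t$ implies $\mathbb{C}_0\models s=t$.

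For completeness, suppose $\mathbb{C}_0\models s=t$. By Corollary~\ref{basis0} it suffices to show $\Md+\CO+\IL\models s=t$, so fix an arbitrary model $M$ of $\Md+\CO+\IL$. By Corollary~\ref{phi}, choose a quantifier-free $\Sigma_f$-formula $\phi(s,t)$, with the same variables as $s$ and $t$, such that $\Md+\IL\vdash s=t\leftrightarrow\phi(s,t)$. Since $\mathbb{C}_0\models\Md+\IL$ and $\mathbb{C}_0\models s=t$, we get $\mathbb{C}_0\models\forall\vec{x}\,\phi(s,t)$. Because $\phi$ uses only the symbols in $\Sigma_f$, this is equivalent to $\mathbb{C}\models\forall\vec{x}\,\phi(s,t)$, where $\mathbb{C}=\mathbb{C}_0|\Sigma_f$. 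By Tarski's quantifier-elimination theorem the theory of algebraically closed fields of characteristic $0$ is complete, and $\mathbb{C}$ is one of its models, so $\forall\vec{x}\,\phi(s,t)$ holds in every algebraically closed field of characteristic $0$.

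Now the $\Sigma_f$-reduct $F=M|\Sigma_f$ is a field of characteristic $0$ (by $\Md+\IL+\CO$), so let $\widehat{F}$ be an algebraic closure; then $\widehat{F}\models\forall\vec{x}\,\phi(s,t)$. Since $\phi(s,t)$ is quantifier-free and $F$ is a substructure of $\widehat{F}$, the formula $\phi(s,t)$ is preserved down to $F$, giving $F\models\forall\vec{x}\,\phi(s,t)$. Hence $M\models\forall\vec{x}\,\phi(s,t)$, and using $\Md+\IL\vdash s=t\leftrightarrow\phi(s,t)$ once more, $M\models s=t$. As $M$ was arbitrary, $\Md+\CO+\IL\models s=t$, which by Corollary~\ref{basis0} yields $\Md+\CO\vdash s=t$.

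The genuinely substantive step is the reduction to algebraically closed fields via Tarski's completeness for ACF$_0$; the only place where a reader might stumble is observing that an arbitrary cancellation meadow of characteristic $0$ need not itself be algebraically closed, which is handled by embedding its $\Sigma_f$-reduct into an algebraic closure and using preservation of quantifier-free formulas under substructures. Everything else is bookkeeping: verifying soundness and chaining the two earlier corollaries.
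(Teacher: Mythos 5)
Your proof is correct and follows essentially the same route as the paper: both reduce derivability to validity in cancellation meadows of characteristic $0$ via Corollary~\ref{basis0}, translate the equation to a quantifier-free $\Sigma_f$-formula via Corollary~\ref{phi}, pass to an algebraic closure, and invoke the completeness of the theory of algebraically closed fields of characteristic $0$. The only difference is presentational: you argue the completeness direction directly (using downward preservation of quantifier-free formulas from $\widehat{F}$ to $F$), whereas the paper argues contrapositively from a counter-model, which amounts to the same thing.
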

\begin{proof} 
Because  $\mathbb{C}_0$ constitutes a meadow of characteristic 0, soundness is 
immediate. 

Assume that $s=t$
is not derivable from $\Md +\CO$. 
By the previous corollary, there  exists a cancellation meadow $M$ of characteristic 
0 with $M\not \models  s=t$. Since $M$ is a field, it has an algebraic closure, say, 
$\widehat{M}$.
Then $\widehat{M}\not\models s= t$ and hence 
$\widehat{M}|\Sigma_f\not \models  \phi(s,t)$ where $\phi(s,t)$ is the 
$\Sigma_f$-representation given in Corollary~\ref{phi}. 
Therefore $\mathbb{C}_0|\Sigma_f\not \models  \phi(s,t)$  by completeness and 
thus $\mathbb{C}_0\not \models s= t$.
\end{proof}

\begin{remark}
Initial algebras provide standard models of equational specifications. They contain 
only elements that can be constructed from those appearing in the specification, and 
satisfy only those closed equations that appear in the specification or are logical 
consequences of them. It is easy to see that $\Md + \CO$ constitutes an initial 
algebra specification of the rational numbers. See also~\cite{BT07}.
\end{remark}

\begin{remark}
In a way very similar to Theorem~\ref{complete0} one can show that 
$\Md +\{\underline{p}=0\}$
is a complete axiomatization of algebraically closed 
fields of characteristic $p$.
\end{remark}

\section{Formally real meadows}
\label{sec:3}
Our first axiomatization of formal realness is the infinite set of axioms given in
Table~\ref{EFR}.

\begin{table}
\centering
\hrule
\begin{align}
	0_{x_0^2 + \cdots +x_n^2}\cdot x_0&=0&\tag{EFR$_n$}\label{EFRn}
\end{align}
\hrule
\caption{The set $\EFR$ of axioms for formally real meadows}
\label{EFR}
\end{table}
\begin{definition}
A meadow $M$ is called formally real if $M\models \EFR$.
\end{definition}
Observe that \eqref{EFRn} is an equational representation of the conditional axiom
\[
\CEFR_n\equiv x_0^2 + \cdots x_n^2 = 0 \rightarrow x_0=0.
\]

\begin{proposition}
For all $n\geq 0$, $\Md + \EFR\vdash \CEFR_n$.
\end{proposition}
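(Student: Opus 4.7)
The plan is to observe that $\EFR_n$ is precisely the equational encoding of $\CEFR_n$ via pseudo-zeros, and then unwind the pseudo-zero on the hypothesis $x_0^2+\cdots+x_n^2=0$. So I would reason in meadow logic by assuming the antecedent and computing.

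First, abbreviate $t \equiv x_0^2 + \cdots + x_n^2$. The key identity is the value of $0_t$ when $t=0$. From the derived identity $0^{-1}=0$ (already listed after Table~\ref{Md}) together with $0\cdot x=0$, I get $1_0 = 0\cdot 0^{-1} = 0$, hence $0_0 = 1 - 1_0 = 1$ (this is also recorded in the excerpt as $0_0=1$). Substituting $t=0$ into $0_t\cdot x_0 = 0$ therefore gives $1\cdot x_0 = 0$, i.e.\ $x_0 = 0$.

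Writing this as a derivation in $\Md + \EFR$: assume $t=0$. Using the congruence rule for $0_{(\cdot)}$ (which is just substitutivity under the defining equation $0_t = 1 - t\cdot t^{-1}$ of Table~\ref{derived}), $\Md$ derives $0_t = 0_0 = 1 - 0\cdot 0^{-1} = 1 - 0 = 1$. Combining with $\eqref{EFRn}$ gives $x_0 = 1\cdot x_0 = 0_t \cdot x_0 = 0$. This is the whole argument for fixed $n$, and since $n$ is a parameter of the axiom scheme, the same derivation works uniformly.

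I do not expect any real obstacle here: everything reduces to the standard manipulations of pseudo-constants recorded in the appendix and the derived identities of \Md. The only thing to be slightly careful about is that the reasoning is purely equational under the assumption $t=0$, so the conclusion $\CEFR_n$ is a conditional equation derivable in equational logic over $\Md + \EFR$ rather than a pure equation.
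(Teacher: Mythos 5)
Your proof is correct and is essentially identical to the paper's own: assume $x_0^2+\cdots+x_n^2=0$, conclude $0_{x_0^2+\cdots+x_n^2}=0_0=1$, and then read off $x_0=0$ from \eqref{EFRn}. The extra detail you supply in justifying $0_0=1$ via $0^{-1}=0$ is exactly the identity the paper records among the consequences of \Md, so there is nothing to add.
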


\begin{proof}
Suppose $x_0^2 + \cdots x_n^2 = 0$. Then $0_{x_0^2 + \cdots x_n^2}= 0_0=1$. 
Hence $x_0=0$ by \eqref{EFRn}.
\end{proof}

Again $\mathbb{Q}_0$ and $\mathbb{R}_0$ are formally real meadows; however, 
$\mathbb{C}_0$ is not formally real.

As a corollary to Theorem~\ref{st} we have

\begin{corollary}\label{basis}
Let $s,t\in \Sigma_{m}$. Then
\[
\Md  +\EFR \vdash s=t \quad \text{ if and only if } 
\quad \Md  +\EFR + \IL \models s=t.
\]
\end{corollary}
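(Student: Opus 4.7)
The plan is to invoke Theorem~\ref{st} directly, exactly as was done for Corollary~\ref{basis0}. Setting $\Sigma = \Sigma_m$ and $E = \Md + \EFR$, the signature condition $\Sigma \supseteq \Sigma_m$ and the axiom condition $E \supseteq \Md$ are immediate. The only content in the proof is to verify that the pair $(\Sigma_m, \Md + \EFR)$ satisfies both the pseudo one propagation property and the pseudo zero propagation property; once these are in place, Theorem~\ref{st} delivers the equivalence between derivability from $\Md + \EFR$ and validity in all models of $\Md + \EFR + \IL$.

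For the propagation properties, I would appeal to the same ingredient used in Corollary~\ref{basis0}: by Corollary 3.1 of~\cite{BBP2013}, the pair $(\Sigma_m, \Md)$ already enjoys both propagation properties, so for all $\Sigma_m$-terms $t, r$ and every context $C[~]$ we have $\Md \vdash 1_t \cdot C[r] = 1_t \cdot C[1_t \cdot r]$ and $\Md \vdash 0_t \cdot C[r] = 0_t \cdot C[0_t \cdot r]$. The passage from $\Md$ to $\Md + \EFR$ does not enlarge the signature, so the class of $\Sigma_m$-terms and contexts is unchanged, and any derivation from $\Md$ is, a fortiori, a derivation from the larger theory $\Md + \EFR$. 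Hence the two propagation properties are inherited verbatim.

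With these hypotheses verified, Theorem~\ref{st} applied to $E = \Md + \EFR$ yields
\[
\Md + \EFR \vdash s = t \quad\text{iff}\quad \Md + \EFR + \IL \models s = t,
\]
which is the claim. I do not anticipate a genuine obstacle: the whole proof is an inheritance argument, and the only point worth spelling out is the observation that adding the equational axiom scheme \eqref{EFRn} on top of $\Md$, without touching the signature, cannot destroy a propagation property that $\Md$ alone already derives.
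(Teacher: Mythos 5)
Your proposal is correct and matches the paper's own proof: both invoke Theorem~\ref{st} with $E = \Md + \EFR$ and justify the propagation hypotheses by citing Corollary~3.1 of~\cite{BBP2013} for $(\Sigma_m,\Md)$, noting that the properties are inherited by the larger axiom set over the unchanged signature. The monotonicity observation you spell out is exactly the (tacit) content of the paper's one-line argument.
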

\begin{proof}
This follows again from the fact that  ($\Md $, $\Sigma_{m}$) has the propagation 
properties (see Corollary 3.1 of~\cite{BBP2013}).
\end{proof}

\begin{remark}
As an alternative we could have chosen the axiom scheme \AEFR\  given in 
Table~\ref{AEFR} as an axiomatization of formal realness. Observe that \eqref{AEFRn} 
expresses the fact that \[\Sigma_{i=0}^n x_i^2 \neq -1\] which is somewhat closer to 
the definition of formal realness found in the literature 
(see also Definition~\ref{formalreal}). In the following theorem we prove 
that \EFR\ and \AEFR\ are indeed equivalent in the context of meadows.
\begin{table}[ht]
\centering
\hrule
\begin{align}
	1_{1+ x_0^2 + \cdots +x_n^2}&=1&\tag{AEFR$_n$}\label{AEFRn}
\end{align}
\hrule
\caption{The set $\AEFR$ of alternative axioms for formally real meadows}
\label{AEFR}
\end{table}
\begin{theorem}\label{alternative}
Let $M$ be a meadow and $s,t\in \Sigma_{m}$. Then 
\[\Md + \EFR \vdash s=t \quad \text{ if and only if } \quad \Md+ \AEFR\vdash s=t.\]
\end{theorem}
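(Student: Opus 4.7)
The plan is to show that modulo $\Md$, the two axiom schemes $\EFR$ and $\AEFR$ are mutually derivable, which immediately gives the coincidence of the two provability relations. So I would prove $\Md+\EFR\vdash\AEFR_n$ and $\Md+\AEFR\vdash\EFR_n$ for every $n\geq 0$.

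The forward direction is a one-step substitution: instantiating $\EFR_{n+1}$ at $(x_0,x_1,\dots,x_{n+1})\mapsto(1,x_0,\dots,x_n)$ gives $0_{1+x_0^2+\cdots+x_n^2}\cdot 1=0$, and the identity $0_u+1_u=1$ then turns this into $\AEFR_n$.

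The reverse direction is the substantive part. For $n=0$ one first observes that $1_{x^2}=1_x$ (hence $0_{x^2}=0_x$) is a direct meadow identity, so $\EFR_0\equiv 0_{x^2}\cdot x=0$ reduces to $0_x\cdot x=0$, which follows from $\Ril$. For $n\geq 1$ the strategy is to ``clear denominators''. Introduce the auxiliary term
$$u\;:=\;1+(x_1\cdot x_0^{-1})^2+\cdots+(x_n\cdot x_0^{-1})^2,$$
so that $\AEFR_{n-1}$ yields $1_u=1$, equivalently $u\cdot u^{-1}=1$. Using the derived identities $x_0^2\cdot x_0^{-2}=1_{x_0}$ and $1_{x_0}\cdot x_0=x_0$, a direct expansion of $u\cdot x_0^2$ collapses to $u\cdot x_0^2=1_{x_0}\cdot s$, where $s:=x_0^2+\cdots+x_n^2$. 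Multiplying by $0_s$ annihilates the right-hand side (since $0_s\cdot s=0$), so $0_s\cdot u\cdot x_0^2=0$; cancelling $u$ via $u\cdot u^{-1}=1$ gives $0_s\cdot x_0^2=0$; and finally multiplying by $x_0^{-1}$ and reducing $x_0^2\cdot x_0^{-1}=x_0$ delivers the desired $0_s\cdot x_0=0$, i.e.\ $\EFR_n$.

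The main obstacle is pinpointing the auxiliary $u$: one must guess that the right term to ``normalise against'' is $1$ together with the squares of the ratios $x_i/x_0$, so that $\AEFR_{n-1}$ becomes applicable and that $u\cdot x_0^2$ collapses neatly to $1_{x_0}\cdot s$. A cleaner but less elementary alternative is to argue semantically: apply Corollary~\ref{basis} together with the analogous basis result for $\Md+\AEFR$ (which is immediate, since propagation is inherited from $(\Sigma_m,\Md)$), reducing the theorem to the assertion that in a zero-totalized field both axiom schemes characterise formal realness, which is exactly Proposition~\ref{sum_of_squares}.
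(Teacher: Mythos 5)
Your proposal is correct, but your main (syntactic) route is genuinely different from the paper's. The paper argues semantically: it notes that Corollary~\ref{basis} holds verbatim with \AEFR\ in place of \EFR\ (the propagation properties depend only on $\Md$ and $\Sigma_m$), so derivability from either theory coincides with validity in the corresponding class of cancellation meadows, and then invokes Proposition~\ref{sum_of_squares} to see that a cancellation meadow satisfies \EFR\ iff it satisfies \AEFR\ --- i.e.\ your closing ``less elementary alternative'' is precisely the paper's proof. Your primary argument instead shows the two schemes are interderivable over $\Md$, and the computations check out: the substitution $(x_0,\dots,x_{n+1})\mapsto(1,x_0,\dots,x_n)$ in \eqref{EFRn} does give \eqref{AEFRn}; and in the converse direction $u\cdot x_0^2 = x_0^2 + 1_{x_0}\cdot(x_1^2+\cdots+x_n^2) = 1_{x_0}\cdot s$ (using $(x_0^{-1})^2x_0^2=1_{x_0}^2=1_{x_0}$ and $1_{x_0}\cdot x_0=x_0$, both consequences of \Ril), after which multiplying by $0_s$, then by $u^{-1}$ using $1_u=1$ from \eqref{AEFRn} instantiated at $x_i\cdot x_0^{-1}$, and finally by $x_0^{-1}$ yields $0_s\cdot x_0=0$. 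What each approach buys: the paper's proof is two lines given the machinery of Theorem~\ref{st} and the Artin--Schreier characterization, but it is nonconstructive and routes an equational fact through model theory; yours is elementary, effective, and self-contained (it uses neither \IL\ nor Proposition~\ref{sum_of_squares}), in the same spirit as the appendix's syntactic proof of Proposition~\ref{efr}, at the cost of having to guess the auxiliary term $u$. The only small debts in your write-up are the derived identities $1_{x^2}=1_x$, $1_x^2=1_x$ and $0_x\cdot x=0$, all of which are available from $\Md$ (cf.\ \eqref{PC2}, \eqref{PC3}, \eqref{PC5} in the appendix), so there is no gap.
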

\begin{proof}
Observe that Corollary~\ref{basis} also holds for \AEFR\ instead of \EFR. 
It therefore suffices to prove that 
\[ 
M\models \EFR \quad \text{ if and only if } \quad M\models \AEFR
\] 
for every cancellation meadow $M$. The claim now follows from 
Proposition~\ref{sum_of_squares} and the fact that a cancellation meadow is a field.
\end{proof}
\end{remark}

A \emph{real closed} field  $F$ is a  formally real field in which every positive 
element has a square root and, moreover, every polynomial of odd degree in one 
indeterminate with coefficients in $F$ has a root. In~\cite{T51}, Tarski also proved 
quantifier elimination for real closed fields in the first-order language over the 
signature $(0,1, -, +, \cdot, <)$. As a consequence, the theory of real closed 
fields is complete |in particular, the theory of real closed fields coincides with 
the theory of the real numbers  over the signature $(0,1, +, \cdot, -, <)$|and is 
decidable (see e.g.~\cite{CK90, P00}).

If $(F,<)$ is an ordered field, the \emph{Artin-Schreier Theorem}~\cite{AS27} 
states that $F$ has an algebraic extension, called the \emph{real closure} 
$\widehat{F}$ of $F$, such that $\widehat{F}$ is a real closed field whose ordering 
is an extension of the given ordering $<$ on $F$ and is unique up to isomorphism 
of fields. The classical proof of this theorem relies on the use of Zorn's Lemma 
which is equivalent to the axiom of choice. However, in 1991 Sander~\cite{S91} 
showed the existence of real closures of ordered fields in Zermelo-Fr\"ankel 
axiomatic set theory.

Our first completeness result for formally real meadows follows the proof of 
Theorem~\ref{complete0} and, in addition, relies on the Artin-Schreier Theorem 
and Tarski's theorem for real closed fields.
\begin{theorem}\label{complete1}
Let $s,t$ be $\Sigma_{m}$-terms. Then 
\[
\Md + \EFR\vdash s=t \quad \text{ if and only if } \quad
\mathbb{R}_0\models s=t.
\]
\end{theorem}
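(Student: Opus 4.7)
The plan is to mimic the proof of Theorem~\ref{complete0}, replacing algebraic closure by real closure and Tarski's result for algebraically closed fields by Tarski's result for real closed fields. Soundness is immediate: $\mathbb{R}_0$ is a meadow and (by Proposition~\ref{sum_of_squares}) a formally real one, so it models $\Md+\EFR$.

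For the converse, I would argue contrapositively. Suppose $\Md+\EFR\not\vdash s=t$. By Corollary~\ref{basis}, there is a cancellation meadow $M$ satisfying $\EFR$ with $M\not\models s=t$. Being a cancellation meadow, $M$ is (modulo the zero-totalized inverse) a field; being in addition a model of $\EFR$, it is in fact formally real via the conditional form $\CEFR_n$ and Proposition~\ref{sum_of_squares}. Then I would invoke Corollary~\ref{phi} to translate the equation $s=t$ into a quantifier-free $\Sigma_f$-formula $\phi(s,t)$, giving $M|\Sigma_f\not\models\phi(s,t)$ under some assignment of the free variables.

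Next, since $M|\Sigma_f$ is formally real, Artin--Schreier gives an ordering on $M|\Sigma_f$ and an algebraic real closure $\widehat{M}$ which is a real closed field containing $M|\Sigma_f$ as a subfield. Because $\phi(s,t)$ is quantifier-free, its negation is preserved when passing from the substructure $M|\Sigma_f$ to $\widehat{M}$ under the same assignment, so $\widehat{M}\not\models\phi(s,t)$. Hence the universal closure $\forall\vec{x}\,\phi(s,t)$ fails in $\widehat{M}$. Tarski's completeness for real closed fields (via quantifier elimination) then implies that the same universal closure fails in $\mathbb{R}$, so $\mathbb{R}|\Sigma_f\not\models\phi(s,t)$ under some assignment, and therefore $\mathbb{R}_0\not\models s=t$, as required.

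The only delicate point is the transfer step: one must be sure that, in passing from $M$ (a zero-totalized field) to $\widehat{M}$ and then to $\mathbb{R}$, the obstruction to $s=t$ is preserved. This works precisely because the coding via $\phi(s,t)$ is quantifier-free over $\Sigma_f$, so preservation under field extensions and Tarski's model-completeness of the theory of real closed fields both apply cleanly. Everything else is bookkeeping: assembling Corollary~\ref{basis} (to get a cancellation model), Corollary~\ref{phi} (to get a quantifier-free $\Sigma_f$-translation), Artin--Schreier (to get a real closed extension), and Tarski (to identify the theory of the real closure with that of $\mathbb{R}$).
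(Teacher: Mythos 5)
Your proof is correct and follows essentially the same route as the paper's: obtain a formally real cancellation counter-model via Corollary~\ref{basis}, order it and take its Artin--Schreier real closure, and transfer the failure of the quantifier-free translation $\phi(s,t)$ from Corollary~\ref{phi} to $\mathbb{R}_0$ by Tarski's completeness of real closed fields. The only (harmless) difference is that you pass to $\phi(s,t)$ before extending to $\widehat{M}$ and invoke preservation of quantifier-free formulas under field extension, whereas the paper first asserts $\widehat{M}\not\models s=t$ and then translates.
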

\begin{proof} 
Because the meadow of real numbers  is formally real, soundness is immediate. 

Assume that $s=t$
is not derivable from $\Md +\EFR$. 
From Corollary~\ref{basis} we get the existence of  a formally real  cancellation 
meadow
$M$ with $M\not \models  s= t$. Since $M$ is formally real, it is orderable by, 
say, $<$. By the Artin-Schreier Theorem $(M, <)$ has  the real closure 
$(\widehat{M}, <)$.
Since  $(\widehat{M},<)\not \models s=t$,  we find that  
$(\widehat{M}, <)|\Sigma_f\not \models  \phi(s,t)$ where $\phi(s,t)$ is the 
first-order representation of $s=t$ given in  Corollary~\ref{phi}.
Since the theory of real closed fields coincides  with the theory of the real 
numbers  over the signature $(0,1,-,  +, \cdot, <)$, we can conclude that 
$\mathbb{R}_0|\Sigma_f\not \models \phi(s,t)$ and therefore 
$\mathbb{R}_0\not \models  s= t$.
\end{proof}
The theory of $\Md +\EFR$ or, equivalently, the theory of  $\Md +\AEFR$ cannot be finitely axiomatized.
\begin{theorem}
$\Md +\AEFR$ has no finite axiomatization.
\end{theorem}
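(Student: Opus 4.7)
The plan is a standard compactness argument. Suppose for contradiction that $\Md + \AEFR$ admits a finite axiomatization $T$. Each axiom in $T$ is derivable from $\Md + \AEFR$, and every such derivation invokes only finitely many instances of the scheme. Hence there exists $N$ such that
\[
\Md \cup \{\AEFR_n : n \leq N\} \vdash T,
\]
and consequently $\Md \cup \{\AEFR_n : n \leq N\}$ is equivalent to $\Md + \AEFR$ and, in particular, derives every $\AEFR_m$. It therefore suffices to exhibit, for each $N$, a meadow that satisfies $\Md$ together with $\AEFR_0, \ldots, \AEFR_N$ but refutes $\AEFR_m$ for some $m > N$.

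The natural source of such meadows is the \emph{level} $s(F)$ of a field $F$, defined as the least $\ell \geq 1$ for which $-1$ is a sum of $\ell$ squares in $F$, with $s(F) = \infty$ precisely when $F$ is formally real. Viewing $F$ as its zero-totalization $F_0$, and using that $1_u = 1 \leftrightarrow u \neq 0$ holds in every cancellation meadow, one immediately obtains
\[
F_0 \models \AEFR_n \iff 1 + x_0^2 + \cdots + x_n^2 \text{ never vanishes in } F \iff s(F) > n+1.
\]
Consequently, any field $F$ with $N+2 \leq s(F) < \infty$ supplies a cancellation meadow, and hence a meadow, that satisfies $\AEFR_0, \ldots, \AEFR_N$ but fails $\AEFR_{s(F)-1}$.

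Fields of arbitrarily large finite level exist by Pfister's theorem: the level of a non-formally-real field is always a power of $2$, and for every $k \geq 0$ there exists a field of level exactly $2^k$, concretely realized as a suitable function field of a Pfister form. Choosing $k$ with $2^k \geq N+2$ yields the desired counterexample model and thereby the contradiction.

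The principal obstacle is the invocation of Pfister's theorem, a nontrivial result from quadratic-form theory. One could attempt to replace it by an explicit construction, for instance producing for each $N$ a field of level at least $N+2$ as a generic extension along the quadric $1 + X_1^2 + \cdots + X_{N+1}^2 = 0$ over a formally real base; however, verifying the requisite lower bound on the level typically still relies on Pfister-style machinery, so citing the theorem directly is the cleanest route.
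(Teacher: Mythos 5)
Your proof is correct and follows the same overall skeleton as the paper's (a finiteness-of-derivations reduction to a single instance $\AEFR_N$, using that the instances form an increasing chain, followed by the exhibition of a cancellation meadow separating $\AEFR_N$ from some higher instance), but the crucial existence result is obtained by a genuinely different route. The paper does not use the level $s(F)$ or Pfister's theorem; instead it invokes Prestel's theorem that for every $k$ there is an ordered subfield $R_k$ of $\mathbb{R}$ with Pythagoras number exactly $k$, takes $k=(n+2)(n+1)+1$, picks a sum of squares $r\in R_k$ that is not a sum of $(n+2)(n+1)$ squares, and verifies by direct computation in the quadratic extension $K_n=R_k(\sqrt{-r})$ that $-1$ is a sum of $k$ squares there but not of $n+1$ squares. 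Your appeal to Pfister's level theorem (every power of $2$ is realized as the level of some field) delivers the required fields of arbitrarily large finite level in one stroke and makes the separation of $\AEFR_n$ from $\AEFR_m$ immediate via the equivalence $F_0\models \AEFR_n \Leftrightarrow s(F)>n+1$, which you correctly justify using $1_u=1\leftrightarrow u\neq 0$ in cancellation meadows; the trade-off is that you import a deeper black-box result from quadratic-form theory, whereas the paper's argument, while longer, only needs Prestel's existence statement plus an elementary hands-on computation in a quadratic extension. Both are valid; neither is fully elementary.
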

\begin{proof}
Suppose $\Md +\AEFR$ has  a finite axiomatization. Then by pure logic, there must exist $n\in \Nat$ such that $\Md +\AEFR_n$ is an axiomatization of $\Md +\AEFR$. In order to obtain a contradiction, we construct a cancellation meadow $K_n$ with
\begin{enumerate}
\item every sum of $n+1$ squares is unequal to -1 (and thus $K_n$ satisfies $\AEFR_n$), and 
\item  -1 is the sum of $m+1$  squares  for some $m> n$ (and thus $K_n$ does not satisfy $\AEFR_{m}$).
\end{enumerate}
The \emph{Pythagoras number} of a field is the smallest positive integer $p$ such that every sum of squares is a sum of $p$ squares. In \cite{P78}, Prestel showed that for every natural number $k$ there exists  an ordered subfield $R_k$ of $\mathbb{R}$ with Pythagoras number $k$. Now let for $k=(n+2)(n+1)+1$ $R_{k}$ be such an ordered subfield of $\mathbb{R}$ with Pythagoras number $k$ and choose $r=r_1^2 + \cdots + r_{k}^2$ in $R_{k}$ that cannot be written as the sum  of $(n+2)(n+1)$ squares. Let $K_n=R_{k}(\sqrt{-r})$ be the zero-totalized field extension of $R_{k}$ with the root of $-r$ and observe that in $K_n$
\begin{enumerate}
\item every sum of $n+1$ squares is unequal to -1: Suppose $s_1^2  +\cdots +s_{n+1}^2=-1$. Viewing $K_n$ as a quotient ring we can write every $s_i$ as $a_i+b_i\cdot \sqrt{-r}$ with $a_i, b_i \in R_{k}$. It follows that
\[
-1 = \Sigma_{i=1}^{n+1} (a_i^2 - b_i^2r) + (\Sigma_{i=1}^{n+1} 2a_ib_i)\cdot \sqrt{-r}.
\]
Therefore $-1 =  \Sigma_{i=1}^{n+1} (a_i^2 - b_i^2r)$. Since $R_k$ is an ordered field, -1 cannot be the sum of squares. So $b_1^2 + \cdots +b_{n+1}^2\neq 0$.  Hence
\[
r=(1^2 + a_1^2 + \cdots + a_{n+1}^2)(b_1^2 + \cdots +b_{n+1}^2)^{-1}.
\]
Thus
\[
r=(1^2 + a_1^2 + \cdots + a_{n+1}^2)(b_1^2 + \cdots +b_{n+1}^2)(b_1^2 + \cdots +b_{n+1}^2)^{-2}.
\]
Whence $r$ can be written as a sum of $(n+2)(n+1)$ squares contrary to the particular choice of $r$.
\item -1 is the sum of $k$ squares: Observe that $r\neq 0$, since it cannot be written as the sum of $(n+2)(n+1)$ squares. Therefore
\[
\begin{array}{rcl}
-1 =r\cdot (\sqrt{-r})^{-2} &=& r_1^2\cdot (\sqrt{-r})^{-2} + \cdots + r_{k}^2\cdot (\sqrt{-r})^{-2} \\
&=&( r_1\cdot \sqrt{-r}^{-1})^2 + \cdots + (r_{k}\cdot \sqrt{-r}^{-1})^2.
\end{array}
\]
\end{enumerate}
\end{proof}
A finite axiomatization of formal realness is obtained by 
extending the signature $\Sigma_m$ of 
meadows with the unary sign (or signum) function $\sg(x)$.
We write $\Sigma_{\textit{fs}}$, $\Sigma_{\textit{ms}}$ for these extended 
signatures, so $\Sigma_{\textit{fs}} = (\Sigma_f, \sg)$ and $\Sigma_{\textit{ms}}
 = (\Sigma_m, \sg)$.
The sign function $\sg(x)$
presupposes an ordering on its domain and
is defined by
\[\sg(x)=\begin{cases}
-1&\text{if }x<0,\\
0&\text{if }x=0,\\
1&\text{if }x>0.
\end{cases}\]

We define the sign function in an equational manner
by the set \SA\ of axioms 
given in Table~\ref{t:a}.
First, notice that by $\Md $ and axiom \eqref{S1}
(or axiom \eqref{S2}) we find 
\[\sg(0)=0\quad\text{and}\quad\sg(1)=1.\]
Then, observe that in combination with the inverse law \IL, 
axiom \eqref{S6} is an equational representation of the conditional axiom
\[\sg(x)=\sg(y)~\rightarrow ~\sg(x+y)=\sg(x).\]
From $\Md$ and axioms
\eqref{S3}--\eqref{S6} one 
can easily compute $\sg(t)$ for any closed term $t$.
Some more consequences of the $\Md  +\SA$ axioms are these (see~\cite{BBP2013}):
\begin{align}
\sg(x^2)&=1_x& \tag{S7}\label{S7}\\
\sg(x^3)&=\sg(x)&\tag{S8}\label{S8}\\
1_x\cdot\sg(x)&=\sg(x)& \tag{S9}\label{S9}\\
\sg(x)^{-1}&=\sg(x)&\tag{S10}\label{S10}\\
\sg(\sg(x))&=\sg(x)&\tag{S11}\label{S11}
\end{align}

\begin{table}
\centering
\hrule
\begin{align}
\sg(1_x)&=1_x&\tag{S1}\label{S1}\\
\sg(0_x)&=0_x&\tag{S2}\label{S2}\\
\sg(-1)&=-1&\tag{S3}\label{S3}\\
\sg(x^{-1})&=\sg(x)&\tag{S4}\label{S4}\\
\sg(x\cdot y)&=\sg(x)\cdot \sg(y)&\tag{S5}\label{S5}\\
0_{\sg(x)-\sg(y)}\cdot (\sg(x+y)-\sg(x))&=0&\tag{S6}\label{S6}
\end{align}
\hrule
\caption{The set \SA\ of axioms for the sign function}
\label{t:a}
\end{table}
Expanding $\mathbb{Q}_0$ and $\mathbb{R}_0$ in the obvious way, we obtain signed 
meadows $(\mathbb{Q}_0, \sg)$ and $(\mathbb{R}_0, \sg)$. 

Signed meadows need not be cancellation meadows.   This can easily be seen as 
follows. Expand the signature of signed meadows by two new constants $a, b$ and 
consider the equational theory $T=\Md  + \SA +\{a \cdot b=0\}$. 
We obtain a model for $T$ by interpreting in $(\mathbb{R}_0, \sg)$ $a$ by 0 and $b$ 
by 1 and construct a second model by interchanging the interpretations of $a$ and 
$b$. It follows that the initial algebra of $T$ satisfies neither $a=0$ nor $b=0$. 
However, $\Md  \cup \SA$ has the propagation properties for pseudo ones and zeros 
(see Corollary~5.1 of~\cite{BBP2013}). 
We therefore have the following finite basis result for signed cancellation meadows.
\begin{corollary}\label{basis2}
Let $s,t\in \Sigma_{ms}$. Then
\[
\Md  +\SA\vdash s=t \quad\text{ if and only if } \quad \Md  +\SA+ \IL \models s=t.
\]
\end{corollary}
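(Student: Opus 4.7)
The plan is to apply Theorem~\ref{st} (the Generic Basis Theorem for Cancellation Meadows) directly to the pair $(\Sigma_{ms}, \Md + \SA)$, exactly as was done for $(\Sigma_m, \Md + \EFR)$ in Corollary~\ref{basis}. The inclusion $\Sigma_{ms} \supseteq \Sigma_m$ and the inclusion $\Md + \SA \supseteq \Md$ both hold by construction, so the only nontrivial hypothesis to discharge is that $(\Sigma_{ms}, \Md + \SA)$ enjoys the propagation properties for pseudo ones and pseudo zeros.

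I would reduce the propagation claims by a routine induction on the build-up of contexts to the case in which the outermost symbol of the context is the sign function $\sg$: for the operations already present in $\Sigma_m$ the propagation properties follow from the known propagation properties of $(\Sigma_m, \Md)$ cited in the proof of Corollary~\ref{basis0}. Thus it suffices to show that for every $\Sigma_{ms}$-term $r$ and every $t$,
\[
\Md + \SA \vdash 1_t \cdot \sg(r) = 1_t \cdot \sg(1_t \cdot r)
\quad\text{and}\quad
\Md + \SA \vdash 0_t \cdot \sg(r) = 0_t \cdot \sg(0_t \cdot r).
\]
For the first identity, axiom \eqref{S5} gives $\sg(1_t \cdot r) = \sg(1_t) \cdot \sg(r)$ and axiom \eqref{S1} gives $\sg(1_t) = 1_t$; combined with the idempotence $1_t \cdot 1_t = 1_t$ (a standard consequence of $\Md$ listed among the pseudo-constant identities in the appendix), we get $1_t \cdot \sg(1_t \cdot r) = 1_t \cdot 1_t \cdot \sg(r) = 1_t \cdot \sg(r)$. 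The argument for $0_t$ is entirely analogous, using \eqref{S2} and $0_t \cdot 0_t = 0_t$.

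This verification is precisely the content of Corollary~5.1 of~\cite{BBP2013}, which is already cited in the paragraph preceding the statement, so in the written proof I would simply invoke that reference and then apply Theorem~\ref{st}. I do not anticipate any real obstacle: the whole argument is a citation followed by a one-line appeal to the Generic Basis Theorem, in perfect parallel with the proofs of Corollaries~\ref{basis0} and~\ref{basis}.
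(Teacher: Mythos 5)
Your proposal is correct and matches the paper's own argument: the paper likewise establishes the corollary by noting that $\Md+\SA$ has the propagation properties for pseudo ones and pseudo zeros (citing Corollary~5.1 of~\cite{BBP2013}) and then invoking Theorem~\ref{st}. Your explicit verification of the one-step propagation through $\sg$ via \eqref{S1}, \eqref{S2}, \eqref{S5} and the idempotence of pseudo constants is exactly the content of that cited result.
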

Signed  meadows are  formally real.

\begin{proposition}\label{efr}
For every $n\geq 0$,  $\Md + \SA \vdash \EFR_n$.
\end{proposition}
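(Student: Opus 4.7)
My plan is to prove this semantically, leveraging Corollary~\ref{basis2}: since $\Md + \SA$ has the pseudo-one and pseudo-zero propagation properties, it suffices to check that every signed cancellation meadow $(M,\sg)$ satisfies the instance \eqref{EFRn}. Because $0_{x_0^2+\cdots+x_n^2}\cdot x_0$ is automatically $0$ when $x_0^2+\cdots+x_n^2\neq 0$, the real content is to show that in $M$ the hypothesis $x_0^2+\cdots+x_n^2=0$ forces $x_0=0$.

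First I would pin down the basic behaviour of $\sg$ in such an $M$. Combining $\IL$, \eqref{S4}, and \eqref{S5}, one has $\sg(x)^2 = \sg(x\cdot x^{-1}) = \sg(1) = 1$ whenever $x\neq 0$, so (in the underlying field) $\sg(x)\in\{-1,+1\}$ on non-zero arguments, while $\sg(0)=0$ by \eqref{S2}; hence $\sg(x)=0\iff x=0$. By \eqref{S7}, $\sg(y^2)=1_y\in\{0,1\}$, so squares carry non-negative sign.

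Next I would prove, by induction on $k$, the closure lemma that for any $a_0,\ldots,a_k$ with each $\sg(a_j)\in\{0,1\}$ the sum $a_0+\cdots+a_k$ again has sign in $\{0,1\}$, and in fact has sign $1$ as soon as some $a_j\neq 0$. The inductive step splits on the signs of the partial sum $S$ and the new summand $a_{k+1}$: if either has sign $0$, the corresponding element is $0$ by the previous paragraph, so the sum reduces to the other term; if both have sign $1$, axiom \eqref{S6} together with $\IL$ yields $\sg(S+a_{k+1})=\sg(S)=1$. Applying the lemma with $a_i=x_i^2$, the hypothesis $\sum x_i^2=0$ forces every $x_i^2=0$, and cancellation then gives $x_i=0$; in particular $x_0=0$, which is exactly what \eqref{EFRn} asserts.

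The only subtlety I anticipate is the mixed-sign case in the closure lemma: axiom \eqref{S6} only discharges its conclusion when $\sg(x)=\sg(y)$, so the case $\sg(S)=1$, $\sg(a_{k+1})=0$ (and its mirror) is not immediate and must be handled by the observation that $\sg(\cdot)=0$ forces the element itself to be $0$ in a cancellation meadow. Once that case analysis is neatly packaged into the induction, the rest is routine.
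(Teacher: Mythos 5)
Your proof is correct and takes essentially the same route as the paper's: both reduce via Corollary~\ref{basis2} to signed cancellation meadows and then combine \eqref{S7}, \eqref{S6} and the cancellation property, your ``closure lemma'' on sums of non-negatively signed elements being a repackaging of the paper's case analysis (zero summands drop out by induction; if all $a_i\neq 0$ then \eqref{S6} forces the sum to have sign $1$, contradicting $\sg(0)=0$). No gaps.
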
 

\begin{proof}
We give a semantic proof; a syntactic proof can be found in the appendix. By 
Corollary~\ref{basis2}
it suffices to prove that $M\models  \EFR_n$ for all signed cancellation meadows. 
Thus assume
$M \models \Md  + \SA + \IL$. Observe that if $a^2_0+\cdots + a^2_n \neq 0$ then 
$1_{a^2_0+\cdots + a^2_n}=1$
and hence $0_{a^2_0+\cdots + a^2_n}\cdot a_0=0$.
We can therefore assume that $a^2_0+\cdots + a^2_n =0$ and prove 
$M \models  \EFR_n$ by
induction on $n$. $\EFR_0$ follows from the cancellation property. 
For $n = m + 1$ we distinguish
three cases.

%\\[.6mm]
Case $a_0 = 0$: Then $0_{a^2_0+\cdots + a^2_{m+1}}\cdot a_0=0$.

%\\[.6mm]
Case $a_i = 0$ for some $0 < i\leq m+1$: Without loss of generality we may assume 
that $i = m+1$.
Then $a^2_0+\cdots + a^2_{m+1}= a^2_0+\cdots + a^2_m$. Hence 
\[
0_{a^2_0+\cdots + a^2_{m+1}}\cdot a_0=0_{a^2_0+\cdots + a^2_{m}}\cdot a_0=0
\]
by the induction hypothesis.

%\\[.6mm]
Case $a_i \neq 0$ for all $0\leq i\leq m+1$: We obtain a contradiction as follows. 
Since $a^2_0+\cdots + a^2_{m+1} =0$
we have $\sg( a^2_0+\cdots + a^2_{m+1})= \sg(0) = 0$. On the other hand, it follows 
from the cancellation
property and \eqref{S7} that $\sg(a^2_i) = 1$ for every $0\leq i\leq m + 1$ and hence 
$\sg( a^2_0+\cdots + a^2_{m+1})= 1$
by \eqref{S6}.
\end{proof}

We can represent signed meadow terms in a standard way similar to ordinary meadow
terms.
\begin{definition}
A term $P$ over $\Sigma_{\textit{ms}}$ is a \emph{Signed Standard Meadow Form
(SSMF)} if, for some $n\in\Nat$, $P$ is an \emph{SSMF of level $n$}.
SSMFs of level $n$ are defined as follows:

%\\[1mm]
%\begin{description}
%\item[\text{\textit{SSMF of level $0$}}] each expression of the form 
SSMF of level $0$: each expression of the form 
$s\cdot t^{-1}$ 
with  $s$ and $t$ $\Sigma_{\textit{fs}}$-terms,
%\item[\textit{SSMF of level $n+1$}] each expression of the form

%\\[.6mm]
SSMF of level $n+1$: each expression of the form
\[0_t\cdot P+1_t\cdot Q\]
with  $t$  a $\Sigma_{\textit{fs}}$-term and $P$ and $Q$  SSMFs of 
level $n$.
%\end{description}
\end{definition}

\begin{theorem}
\label{SSMF}
For each term $s$ over $\Sigma_{ms}$ there exists  an SSMF $s'$
with the same variables such that  $\Md  + \SA\vdash s=s'$.
\end{theorem}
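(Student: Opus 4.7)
The plan is to prove the theorem by structural induction on $s$, in close parallel to the proof of Theorem~\ref{SMF}. The base cases (variables and the constants $0, 1$) are immediate, since each is already a $\Sigma_{\textit{fs}}$-term $t$ and $t = t\cdot 1^{-1}$ is an SSMF of level $0$. For the inductive cases corresponding to $+$, $-$, $\cdot$ and $^{-1}$, I would reuse the constructions used to prove Theorem~\ref{SMF}: those arguments manipulate SMFs purely through $\Md$ and never inspect the internal shape of the base terms, so they transfer verbatim once $\Sigma_f$ is replaced by $\Sigma_{\textit{fs}}$ throughout. The genuinely new case is $s = \sg(s_1)$.

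For $s = \sg(s_1)$, the outer induction hypothesis yields an SSMF $s_1'$ with $\Md + \SA \vdash s_1 = s_1'$, so it remains to rewrite $\sg(s_1')$ to SSMF form. I would do this by a secondary induction on the level of $s_1'$. At level $0$, write $s_1' = u\cdot v^{-1}$ with $u, v \in \Sigma_{\textit{fs}}$; axioms~\eqref{S4} and~\eqref{S5} give $\sg(u\cdot v^{-1}) = \sg(u)\cdot \sg(v)$, a pure $\Sigma_{\textit{fs}}$-term, whence $\sg(u)\cdot \sg(v)\cdot 1^{-1}$ is an SSMF of level~$0$. At level $n+1$, write $s_1' = 0_t\cdot P + 1_t\cdot Q$ and derive the key identity
\[
\sg(0_t\cdot P + 1_t\cdot Q) = 0_t\cdot \sg(P) + 1_t\cdot \sg(Q).
\]
Applying the inner induction hypothesis to $\sg(P)$ and $\sg(Q)$ produces SSMFs for them, and after promoting both to a common level via $R = 0_1\cdot R + 1_1\cdot R$ (valid since $0_1 = 0$ and $1_1 = 1$), the result is an SSMF of the desired level.

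The main obstacle is the displayed identity above. Rather than attempt a syntactic derivation by wrestling with the pseudo-constant calculus and the sign axioms, I would invoke Corollary~\ref{basis2}: since the equation is over $\Sigma_{\textit{ms}}$, it suffices to check it in every signed cancellation meadow. There one case-splits on whether $t = 0$ or $t \neq 0$: in the first subcase $0_t = 1$ and $1_t = 0$ collapse both sides to $\sg(P)$, and in the second $0_t = 0$ and $1_t = 1$ collapse both to $\sg(Q)$. Hence the identity is valid in every model of $\Md + \SA + \IL$ and therefore provable in $\Md + \SA$. With this lemma in hand, the structural induction on terms goes through and produces the required SSMF $s'$ with the same variables as $s$ and $\Md + \SA \vdash s = s'$.
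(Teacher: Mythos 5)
Your proposal is correct and follows essentially the same route as the paper: both reduce the theorem to showing that the class of SSMFs is closed under $\sg$ (delegating the non-$\sg$ cases to the structural-induction proof of Theorem~\ref{SMF} over the enlarged signature), both settle level $0$ with \eqref{S4} and \eqref{S5}, and both rest on the key identity $\sg(0_t\cdot P+1_t\cdot Q)=0_t\cdot\sg(P)+1_t\cdot\sg(Q)$ at level $n+1$. The only divergence is how that identity is justified. The paper derives it purely equationally in a few lines: from $0_t\cdot(0_t\cdot P+1_t\cdot Q)=0_t\cdot P$ and $1_t\cdot(0_t\cdot P+1_t\cdot Q)=1_t\cdot Q$ one gets, using \eqref{S1}, \eqref{S2}, \eqref{S5} and $0_t+1_t=1$, that the sign of the whole term equals $0_t\cdot\sg(P)+1_t\cdot\sg(Q)$. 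You instead invoke Corollary~\ref{basis2} and verify the identity in every signed cancellation meadow by the case split $t=0$ versus $t\neq 0$. That is sound and non-circular (Corollary~\ref{basis2} does not depend on the present theorem), but it imports the generic basis machinery where a short direct computation with the sign axioms suffices, so the paper's route is the more elementary one. Your explicit level-promotion device $R=0_1\cdot R+1_1\cdot R$ addresses a bookkeeping point the paper leaves implicit (its inner induction happens to be level-preserving), and is a harmless, even welcome, addition.
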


\begin{proof}
In~\cite{BBP2013} a proof is given for $\Sigma_{m}$-terms by structural induction. 
It thus   suffices to prove that the set of SSMFs is closed under $\sg$. We employ 
induction on level height $n$. If the SSMF $t$ has level 0 then 
$t=s_1\cdot {s_2}^{-1}$ for $\Sigma_{\textit{fs}}$-terms $s_1,s_2$. Then 
\[ 
\sg(t)=\sg(s_1\cdot s_2^{-1})= \sg(s_1)\cdot \sg(s_2^{-1})=\sg(s_1) \cdot 
\sg(s_2)=\sg(s_1\cdot s_2)\cdot 1^{-1}
\]
by axiom \eqref{S4} and \eqref{S5}. Assume $t=0_{s}\cdot P + 1_s \cdot Q$ for some 
$\Sigma_{\textit{fs}}$-term $s$ and SSMFs $P,Q$ of level $n$. Then 
$0_s\cdot t=0_s\cdot P$ and $1_s\cdot t=1_s\cdot Q$. 
Hence
\begin{align*}
\sg(t)&=(0_s + 1_s)\cdot \sg(t)\\
&=0_s\cdot \sg(t) + 1_s\cdot\sg(t)\\
&=\sg(0_s\cdot t) + \sg(1_s\cdot t)\\
&=0_s\cdot \sg(P) + 1_s \cdot \sg(Q)
\end{align*}
by \eqref{S1}, \eqref{S2} and \eqref{S5}.
\end{proof}

As in the case of ordinary meadow equations, it follows that every signed meadow 
equation has a first-order representation over the signature of signed fields. 
In the corollary and the proposition below we again apply freely $\IL$ since we 
will use these results only in the context of cancellation meadows.
\begin{corollary}\label{phi2}
For each equation $s=t$ over $\Sigma_{ms}$ there exists a quantifier-free 
first-order formula $\phi(s,t)$ over $\Sigma_{\textit{fs}}$ with the same 
variables such that 
\[\Md  +\SA + \IL\vdash s=t \leftrightarrow \phi(s,t).\]
\end{corollary}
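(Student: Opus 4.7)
The plan is to follow exactly the same template as the proof of Corollary~\ref{phi}, replacing every appeal to Theorem~\ref{SMF} by Theorem~\ref{SSMF} and allowing the ``base'' terms to lie in $\Sigma_{\textit{fs}}$ rather than $\Sigma_f$. In particular, I would first invoke Theorem~\ref{SSMF} to replace $s$ and $t$ by SSMFs $s'$ and $t'$ with the same variables, using $\Md+\SA\vdash s=s'$ and $\Md+\SA\vdash t=t'$, and then induct simultaneously on the levels $n,m$ of $s'$ and $t'$.

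In the base case $n=m=0$, we have $s'=s_1\cdot s_2^{-1}$ and $t'=t_1\cdot t_2^{-1}$ for some $\Sigma_{\textit{fs}}$-terms $s_1,s_2,t_1,t_2$. Using $\IL$, clearing denominators in the usual way gives that $s'=t'$ is equivalent to the case analysis on whether $s_2$ and $t_2$ vanish, so I would take
\begin{eqnarray*}
\phi(s,t)&\equiv & (s_2=0\rightarrow (t_1=0\ \vee\ t_2=0))\ \wedge\\
&& (t_2=0\rightarrow (s_1=0\ \vee\ s_2=0))\ \wedge\\
&& (s_2\cdot t_2\neq 0\rightarrow s_1\cdot t_2 = t_1\cdot s_2),
\end{eqnarray*}
which is a quantifier-free $\Sigma_{\textit{fs}}$-formula by construction.

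For the inductive step, suppose $t'$ has level $k+1$, so $t'=0_{t''}\cdot P+1_{t''}\cdot Q$ with $t''$ a $\Sigma_{\textit{fs}}$-term and $P,Q$ SSMFs of level $k$. Using $\IL$, if $t''=0$ then $0_{t''}=1$ and $1_{t''}=0$ so $t'=P$; and if $t''\neq 0$ then $t'=Q$. The induction hypothesis gives quantifier-free $\Sigma_{\textit{fs}}$-formulas $\phi(s,P)$ and $\phi(s,Q)$, and I set
\[
\phi(s,t)\equiv (t''=0\rightarrow \phi(s,P))\ \wedge\ (t''\neq 0\rightarrow \phi(s,Q)).
\]
The case where $s'$ has positive level is symmetric.

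There is essentially no obstacle here: the only substantive point is that guard terms in SSMFs range over $\Sigma_{\textit{fs}}$-terms (so they may contain $\sg$), which is harmless because the constructed $\phi(s,t)$ is already allowed to be a $\Sigma_{\textit{fs}}$-formula. Soundness of the equivalence in each clause uses only $\Md+\IL$ together with the defining equations of $0_x,1_x$, exactly as in Corollary~\ref{phi}; no new axiom from $\SA$ is needed at this stage, since $\SA$ enters only via Theorem~\ref{SSMF} when reducing to SSMFs.
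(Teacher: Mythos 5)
Your proposal is correct and is exactly the argument the paper intends: the paper leaves Corollary~\ref{phi2} without an explicit proof precisely because it is the proof of Corollary~\ref{phi} verbatim with Theorem~\ref{SSMF} in place of Theorem~\ref{SMF} and with the base and guard terms drawn from $\Sigma_{\textit{fs}}$, which is what you spell out. Your closing observation that \SA\ enters only through the reduction to SSMFs is also the right accounting of where each axiom set is used.
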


Given a signed cancellation meadow, we shall consider it ordered by the order 
induced by the sign function. This can be expressed by the axiom
\begin{equation}
x < y \leftrightarrow \sg (y-x)=1 \tag{\SO\,}.
\end{equation}

\begin{proposition}\label{mainprop}
\mbox{}
\begin{enumerate}
\item $\Md  + \IL + \SA + \SO\vdash OF$, and
\item for all terms $s,t$ over $\Sigma_{\textit{fs}}$ there exists a formula 
$\psi(s,t)$ over $(\Sigma_f,<)$ with the same free variables such that
\[
\Md  + \IL +  \SA + \SO\vdash s=t \leftrightarrow \psi(s,t).
\]
\end{enumerate}
\end{proposition}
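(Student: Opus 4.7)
The plan is to handle the two parts essentially independently, with part~(1) setting up the tools needed for part~(2). For part~(1), the key preliminary step is to show that $\sg$ obeys a strict trichotomy in any model of $\Md+\IL+\SA$: namely, $\sg(x)=0$ iff $x=0$, while $\sg(x)\in\{-1,1\}$ otherwise. Axiom~\eqref{S5} gives $\sg(x^{2})=\sg(x)^{2}$, and combined with \eqref{S7} this yields $\sg(x)^{2}=1_{x}$, which specializes via~\IL\ to $\sg(x)^{2}=1$ whenever $x\neq 0$; in the field guaranteed by~\IL\ this forces $\sg(x)=\pm 1$. The complementary observation $\sg(-x)=\sg(-1)\cdot\sg(x)=-\sg(x)$ (from \eqref{S5} and \eqref{S3}), together with $\sg(0)=0$, rounds out the picture.

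Armed with this trichotomy, each axiom in~\OF\ reduces to a short computation under~\SO. OF1 is immediate: if $x\neq 0$, then $\sg(x)=1$ gives $0<x$, while $\sg(x)=-1$ gives $\sg(-x)=1$ and hence $x<0$. For OF2, $x<y$ unfolds to $\sg(y-x)=1$; then $x=y$ would force $\sg(y-x)=0$ and $y<x$ would force $\sg(y-x)=-\sg(x-y)=-1$, both contradictions. OF3 uses $(y+z)-(x+z)=y-x$ directly, and OF4 uses $\sg((y-x)\cdot z)=\sg(y-x)\cdot\sg(z)=1\cdot 1=1$ via~\eqref{S5}.

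For part~(2), I will eliminate $\sg$ from the equation $s=t$ by induction on the number of occurrences of $\sg$ in it. If none appear, $s=t$ is already a $\Sigma_f$-formula, which is in particular a $(\Sigma_f,<)$-formula, so set $\psi(s,t)\equiv (s=t)$. Otherwise, pick an innermost subterm $\sg(r_{0})$, so that $r_{0}$ itself is a $\Sigma_f$-term. By OF1 and OF2 exactly one of $r_{0}=0$, $0<r_{0}$, $r_{0}<0$ holds, and in these cases $\sg(r_{0})$ equals $0$, $1$, $-1$ respectively (the second by~\SO, the third since then $\sg(-r_{0})=1$ and $\sg(-r_{0})=-\sg(r_{0})$). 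Hence $s=t$ is provably equivalent to
\[
\bigvee_{c\in\{0,1,-1\}}\bigl(R_{c}(r_{0})\,\wedge\, s[c/\sg(r_{0})]=t[c/\sg(r_{0})]\bigr),
\]
where $R_{0}(r_{0}),R_{1}(r_{0}),R_{-1}(r_{0})$ are $r_{0}=0$, $0<r_{0}$, $r_{0}<0$. Each substituted equation has strictly fewer $\sg$-occurrences, so the induction hypothesis produces $(\Sigma_f,<)$-equivalents; combining them yields the required $\psi(s,t)$.

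The main obstacle is the sign-trichotomy of part~(1): passing from $\sg(x)^{2}=1_{x}$ to the clean case distinction $\sg(x)\in\{-1,0,1\}$ relies on the fact that \IL\ turns the meadow into a field, so that $\sg(x)^{2}-1=(\sg(x)-1)\cdot(\sg(x)+1)=0$ admits only the solutions $\sg(x)=\pm 1$. Once this is in place, the derivation of~\OF\ and the sign-elimination of part~(2) both reduce to routine syntactic manipulations driven by the axioms~\eqref{S3}, \eqref{S5}, and~\SO.
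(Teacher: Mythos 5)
Your part (1) follows essentially the same route as the paper: the trichotomy $\sg(x)\in\{-1,0,1\}$ is obtained exactly as in the paper's treatment of \eqref{OF1} (via $\sg(x)^2=1_x$ and the absence of zero divisors), and \eqref{OF3}, \eqref{OF4} are identical. The one divergence is \eqref{OF2}: you refute $y<x$ by deriving $1=-1$, which is not by itself a contradiction in an arbitrary field (think characteristic $2$); you need the extra observation that \eqref{S6} with $x=y=1$ gives $\sg(\underline{2})=1$, so $\underline{2}=0$ would force $0=\sg(0)=\sg(\underline{2})=1$. The paper sidesteps this by applying \eqref{S6} directly to $(y-x)+(x-y)=0$, obtaining $0=\sg(0)=1$ at once. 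This is a one-line repair, not a real gap (both arguments, like the paper's, tacitly assume nontriviality $0\neq 1$).

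Part (2) is where you take a genuinely different route. The paper constructs, by structural induction on a term, a $(\Sigma_f,<)$-formula $\gamma(x,t)$ defining the graph of $t$, and sets $\psi(s,t)\equiv\forall x\forall y((\gamma(x,s)\wedge\gamma(y,t))\rightarrow x=y)$, so its $\psi$ carries universal quantifiers. You instead eliminate $\sg$ one innermost occurrence at a time by splitting on the provably exhaustive trichotomy $r_0=0$, $0<r_0$, $r_0<0$ (exhaustive by \eqref{OF1}; in each case $\sg(r_0)$ is provably $0$, $1$, $-1$, the last via $\sg(-r_0)=-\sg(r_0)$, so congruence justifies the substitution), and your induction on the number of $\sg$-occurrences terminates with a quantifier-free $\psi(s,t)$ whose free variables are still those of $s=t$ because $R_c(r_0)$ retains the variables of $r_0$. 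Both constructions are correct and both suffice for Theorem~\ref{main}, where completeness of the theory of real closed fields absorbs quantifiers anyway; yours yields the slightly stronger quantifier-free form at the cost of a $3^k$ blow-up in the number of disjuncts for $k$ distinct $\sg$-subterms, while the paper's stays linear in the term size.
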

\begin{proof}
\begin{enumerate}
\item We prove the derivability of the axioms given in Table~\ref{orderedfield}.

\eqref{OF1}: Assume $x\neq 0$. Then $1_x=1$. Therefore
\begin{align*}
(1-\sg(x))\cdot (1+\sg(x)) &=1_x\cdot (1-\sg(x)^2)\\
&= 1_x - \sg(x)^2 &\text{by \eqref{S9}}\\
&= 1_x - 1_x & \text{by \eqref{S5}, \eqref{S7}}\\
&= 0.
\end{align*}
Hence $\sg(x) = 1$ or $\sg(x) = -1$ by cancellation. 
If $\sg(x) =1$ then $0< x$; if $\sg(x)=-1$ then 
$1= -1\cdot \sg(x)=\sg(-1)\cdot \sg(x)=\sg(-x)$ and thus $x<0$.

\eqref{OF2}: Assume $x<y$. Then $\sg(y-x)=1$. Thus, if $y<x$ then $\sg(x-y)=\sg(y-x)$ and 
so $0=\sg(0)=\sg((y-x)+(x-y))=1$ by \eqref{S6}. Likewise, if $x=y$, then 
$0=\sg(x-x)=\sg(y-x)=1$. Thus $\neg( y<x \ \vee\ x=y)$.

\eqref{OF3}: If $x<y$ then $1=\sg(y-x)=\sg((y+z)-(x+z))$. Hence $x+z < y+z$.

\eqref{OF4}: If $x<y$ and $0<z$ then $\sg(y-x)=1$ and $\sg(z)=1$. Thus 
$1=1\cdot 1= \sg(y-x)\cdot \sg(z) = \sg((y-x)\cdot z)=\sg(y\cdot z - x\cdot z)$, 
i.e.\ $x\cdot z < y\cdot z$.

\item Let $t$ be a $\Sigma_{fs}$-term, $\textit{Var}\,(t)$ be  the set of variables 
occurring in $t$ and $x\not\in \textit{Var}(t)$.
By structural induction, we will first construct a formula $\gamma(x,t)$ over 
$(\Sigma_f,<)$ with free variables 
$\textit{Var}\,(t)\cup \{x\}$ such that for all $(\Sigma_f,\sg)$-terms $s$, 
$M'\models s=t \leftrightarrow \gamma(s,t)$. 
\begin{enumerate}
\item $t=0$, $t=1$ or $t=y$ ($y\not \equiv x$):   $\gamma(x,t)\equiv x=t$,
\item $t\equiv -t'$: $\gamma(x,t)\equiv \forall z (\gamma(z,t') \rightarrow x=-z)$ 
with $z\not \in \textit{Var}\,(t')\cup \{x\}$,
\item $t\equiv t_1\diamond t_2$ with $\diamond\in \{ +,\cdot\}$: 
$\gamma(x,t)\equiv \forall y \forall z (\gamma(y,t_1) \ \wedge\ \gamma(z, t_2) 
\rightarrow x=y \diamond z)$ with $y, z\not \in \textit{Var}\,(t_1\diamond 
t_2)\cup \{x\}$,
\item $t\equiv \sg(t')$: 
\begin{eqnarray*}
\gamma(x,t)&\equiv& (x=0 \rightarrow \gamma(x,t')) \ \wedge\ \\
&&(x\neq 0 \rightarrow \forall z (\gamma(z,t') \rightarrow \\
&&\qquad(( 0<z \ \wedge\  x = 1)\ \vee\  (z<0 \ \wedge\ x = -1))))
\end{eqnarray*}
with $z\not \in \textit{Var}\,(t')\cup \{x\}$.
\end{enumerate}
We prove that $\gamma(x,\sg(t'))$ meets the requirements. 
$\gamma(x,\sg(t') )$ is a formula over $(\Sigma_f,<)$
since $\gamma(x,t')$ is so. Moreover, the free variables in $\gamma(x,\sg(t') )$ 
are $\textit{Var}\,(t')\cup \{x\}=\textit{Var}\,(\sg(t'))\cup \{x\}$. Now assume 
$s=\sg(t')$. If $s=0$ then $\sg(t')=0$. If $t'\neq 0$ we can argue as in (1) in the 
case of \eqref{OF1} to derive a contradiction. Thus $t'=0$. Hence $s=t'$ and therefore 
$\gamma(s,t')$. To prove the second conjunct assume $s\neq 0$ and $z=t'$. 
Then $z\neq 0$ and hence $0<z$ or $z<0$ by \eqref{OF1}. If $0<z$ then $s=\sg(z)=1$; 
likewise, if $z<0$ then $\sg(-z)=1$ and hence $s=\sg(z)=-1$. 
$\gamma(s,\sg(t')) \rightarrow s=\sg(t')$ follows in a similar fashion. 
Now observe that 
\[
M'\models s=t \leftrightarrow \forall x\forall y (( x=s \ \wedge\ y=t) 
\rightarrow x=y)
\]
with $x,y \not\in \textit{Var}\,(s)\cup \textit{Var}\,(t)$. Thus we can take 
\[
\psi(s,t)\equiv \forall x\forall y (( \gamma(x,s) \ \wedge\ \gamma(y,t)) 
\rightarrow x=y).
\]
\end{enumerate}
\end{proof}

We slightly modify the proof of Theorem~\ref{complete1} in order to obtain:

\begin{theorem}\label{main}
Let $s,t$ be $\Sigma_{ms}$-terms. Then 
\[
\Md + \SA\vdash s=t \quad  \text{ if and only if } \quad
(\mathbb{R}_0,\sg)\models s=t.
\]
\end{theorem}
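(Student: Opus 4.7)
The plan is to follow the structure of Theorem~\ref{complete1}, replacing Corollary~\ref{phi} by Corollary~\ref{phi2} together with Proposition~\ref{mainprop}(2), so that signed meadow equations are reduced to first-order formulas over the signature $(\Sigma_f,<)$ of ordered fields. Soundness is immediate since $(\mathbb{R}_0,\sg)$ satisfies $\Md+\SA$.

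For completeness, suppose $\Md+\SA\not\vdash s=t$. By Corollary~\ref{basis2} there is a signed cancellation meadow $M$ with $M\not\models s=t$ under some assignment $\vec a$ to the free variables. I expand $M$ by the relation $<$ prescribed by $\SO$, namely $x<y \leftrightarrow \sg(y-x)=1$. Proposition~\ref{mainprop}(1) then guarantees that $(M|\Sigma_f,<)$ is an ordered field, so by the Artin-Schreier Theorem it has a real closure $(\widehat M,<)$. Extending the sign function to $\widehat M$ in the canonical way, i.e.\ by $\sg(x)=-1,0,1$ according as $x<0$, $x=0$, $x>0$, yields a signed cancellation meadow $(\widehat M,\sg)$ satisfying $\Md+\SA+\IL+\SO$, and the inclusion $M\hookrightarrow \widehat M$ preserves all $\Sigma_{ms}$-operations; hence $(\widehat M,\sg)\not\models s=t$ at $\vec a$.

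Next, I apply Corollary~\ref{phi2} to obtain a quantifier-free formula $\phi(s,t)$ over $\Sigma_{fs}$ that is equivalent to $s=t$ modulo $\Md+\SA+\IL$, and then apply Proposition~\ref{mainprop}(2) to each atomic subformula of $\phi(s,t)$ to obtain a first-order formula $\psi(s,t)$ over $(\Sigma_f,<)$ that is equivalent to $s=t$ in any model of $\Md+\IL+\SA+\SO$. Consequently $(\widehat M,<)|\Sigma_f\not\models\psi(s,t)[\vec a]$, so the closed sentence $\exists\vec x\,\neg\psi(s,t)$ is true in the real closed field $(\widehat M,<)$. By Tarski's completeness theorem for real closed fields this sentence also holds in $(\mathbb{R},<)$, yielding a witness $\vec b\in\mathbb{R}$ with $(\mathbb{R},<)\not\models\psi(s,t)[\vec b]$. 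Applying the equivalence once more in $(\mathbb{R}_0,\sg)$ with its induced order gives $(\mathbb{R}_0,\sg)\not\models s=t$ at $\vec b$.

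The main obstacle is really bookkeeping: one must verify that the canonical sign function on $\widehat M$ agrees with $\sg$ on the image of $M$, so that the embedding preserves $\sg$ and the values of $\Sigma_{ms}$-terms transfer, and that $(\widehat M,\sg)$ actually satisfies $\SA$. A mild subtlety is that $\psi(s,t)$ is in general not quantifier-free, so the passage from $\widehat M$ to $\mathbb{R}$ cannot rely on substructure preservation as in Theorem~\ref{complete1}; this is precisely why the argument routes through the completeness of real closed fields applied to the closed existential sentence $\exists\vec x\,\neg\psi(s,t)$.
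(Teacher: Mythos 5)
Your proposal is correct and follows essentially the same route as the paper's proof: Corollary~\ref{basis2} to get a signed cancellation counter-meadow, the order induced by $\SO$, Proposition~\ref{mainprop}(1) plus Artin--Schreier to pass to a real closure, Corollary~\ref{phi2} and Proposition~\ref{mainprop}(2) to translate into a $(\Sigma_f,<)$-formula, and Tarski's theorem to transfer to $\mathbb{R}$. Your two ``bookkeeping'' remarks (that the canonical sign on $\widehat M$ extends $\sg$ so the embedding preserves $\Sigma_{ms}$-terms, and that $\psi(s,t)$ is not quantifier-free so one must route through a closed sentence rather than substructure preservation) are points the paper glosses over but handles implicitly in the same way.
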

\begin{proof}
Soundness is again immediate. 

Assume that $s=t$
is not derivable from $\Md +\SA$. By Corollary~\ref{basis2} we now have a counter 
model $M$ which is a signed cancellation meadow.
Since $M$ is formally real by Proposition~\ref{efr}, it is orderable. 
We consider the particular order given by 
\[x< y \leftrightarrow \sg(y-x) = 1.\]
Then $(M,<)$ is an ordered field by Proposition~\ref{mainprop}.1. 
By the Artin-Schreier Theorem $(M,<)$ has  real closure $(\widehat{M},<)$ whose 
ordering is an extension of the given ordering. 
We find that $(\widehat{M},<)\not \models s=t$. By Corollary~\ref{phi2}, $s=t$ is 
equivalent to a $(\Sigma_f,\sg)$-formula 
$\phi(s,t)$. Every equation $s'=t'$ in $\phi(s,t)$ can be replaced by a 
$(\Sigma_f,<)$-formula $\psi(s',t')$ by Proposition~\ref{mainprop}.2. Thus there 
exists a $(\Sigma_f,<)$-formula $\Gamma$ such that 
$(\widehat{M},<)\models s=t \leftrightarrow \Gamma$ and hence 
$(\widehat{M},<)|\Sigma_f\not \models  \Gamma$. Since the theory of real closed 
fields coincides  with the theory of the real numbers  over the signature 
$(0,1,-,  +, \cdot,  <)$, we can conclude that $\mathbb{R}_0\not \models\Gamma$. 
Therefore $(\mathbb{R}_0, \sg)\not \models s=t$.
\end{proof}

\begin{corollary}
If an open quantifier free formula $\phi(x_1,...,x_k)$ is satisfiable in some signed 
cancellation meadow then that formula is satisfiable in the signed meadow of reals.
\end{corollary}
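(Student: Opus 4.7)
The strategy is to follow the translation scheme used in the proof of Theorem~\ref{main}, but applied to a general quantifier-free formula and its existential closure rather than to a single equation. Suppose $\phi(x_1,\ldots,x_k)$ is a quantifier-free $\Sigma_{\textit{ms}}$-formula and $(M,\sg)$ is a signed cancellation meadow with a tuple $(a_1,\ldots,a_k)\in M^k$ witnessing $\phi$. Since $M\models \Md + \SA + \IL$, Proposition~\ref{efr} guarantees that $M$ is formally real and hence orderable, and the order induced by $\SO$ makes $(M,<)$ an ordered field by Proposition~\ref{mainprop}.1. The Artin-Schreier Theorem then embeds $(M,<)$ in its real closure $(\widehat{M},<)$, a real closed field in which $\phi(a_1,\ldots,a_k)$ still holds.

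Next I would translate $\phi$ exactly as in the proof of Theorem~\ref{main}: replace each atomic equation $s_i = t_i$ occurring in $\phi$ by its $(\Sigma_f,\sg)$-equivalent obtained from Corollary~\ref{phi2}, and then replace every remaining $\sg$-subterm by the corresponding $(\Sigma_f,<)$-formula furnished by Proposition~\ref{mainprop}.2. Propagating these substitutions through the Boolean structure of $\phi$ produces a first-order formula $\Gamma(x_1,\ldots,x_k)$ over $(\Sigma_f,<)$ such that
\[
(M,\sg)\models \phi(a_1,\ldots,a_k)\ \Longleftrightarrow\ (\widehat{M},<)\models \Gamma(a_1,\ldots,a_k),
\]
so the existential sentence $\exists x_1\cdots\exists x_k\,\Gamma$ holds in $(\widehat{M},<)$.

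Now I would invoke Tarski's theorem. The sentence $\exists x_1\cdots\exists x_k\,\Gamma$ is a first-order sentence over $(\Sigma_f,<)$ holding in the real closed field $\widehat{M}$; by completeness of the theory of real closed fields, it also holds in $\mathbb{R}$, producing a tuple $(r_1,\ldots,r_k)\in \mathbb{R}^k$ with $\mathbb{R}\models \Gamma(r_1,\ldots,r_k)$. Applying the same translation in the reverse direction inside $(\mathbb{R}_0,\sg)$, which itself satisfies $\Md + \SA + \IL + \SO$, yields $(\mathbb{R}_0,\sg)\models \phi(r_1,\ldots,r_k)$.

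The main obstacle is purely bookkeeping: one must verify that the translations of Corollary~\ref{phi2} and Proposition~\ref{mainprop}.2, which are stated for a single equation, extend coherently to an arbitrary Boolean combination and remain sound in both directions, so that the biconditional between $\phi$ and $\Gamma$ survives both inside $\widehat{M}$ and inside $\mathbb{R}_0$. Since both structures satisfy $\Md + \SA + \IL + \SO$, the two required biconditionals are immediate and nothing in the argument beyond this routine propagation is needed.
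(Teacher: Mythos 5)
Your argument is correct, but it is not the route the paper takes. The paper's proof is a two-line contrapositive reduction to Theorem~\ref{main}: if $\phi$ were unsatisfiable in $(\mathbb{R}_0,\sg)$ then $\neg\phi$ would be valid there, hence (treating the open formula as an equation, which is possible in this setting since $t\neq 0$ can be rendered as $1_t=1$, a disjunction of equations as a product, and so on) derivable from $\Md+\SA$, hence true in \emph{every} signed cancellation meadow --- contradicting satisfiability in $M$. You instead re-run the model-theoretic machinery underlying the proof of Theorem~\ref{main} directly: order $M$ via $\sg$, pass to the real closure, translate $\phi$ into a $(\Sigma_f,<)$-formula $\Gamma$ using Corollary~\ref{phi2} and Proposition~\ref{mainprop}.2, and transfer the existential closure of $\Gamma$ to $\mathbb{R}$ by completeness of real closed fields. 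Your version buys independence from the (implicit) encoding of open formulas as single equations, at the price of having to verify that the translation lemmas propagate through Boolean combinations and that the real closure of $(M,<)$, with its zero-totalized inverse and order-induced sign, is again a signed cancellation meadow containing $(M,\sg)$ as a subalgebra, so that the quantifier-free $\phi$ persists upward; both checks go through and are the same ones the paper relies on tacitly in Theorem~\ref{main}. Two small points of precision: Proposition~\ref{mainprop}.2 eliminates $\sg$ at the level of atomic equations rather than of subterms, and the resulting $\Gamma$ is no longer quantifier-free --- which is harmless, since completeness of the theory of real closed fields transfers arbitrary first-order sentences.
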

\begin{proof}
This is the same fact in another wording. If $\phi(x_1,...,x_k)$ is not satisfiable 
in the signed reals then  $\neg \phi(x_1,...,x_k)$
is valid in the reals and by the preceding theorem it must be derivable from 
$\Md +\SA$, whence it is true in all cancellation meadows
in contradiction with the assumption.
\end{proof}

\begin{remark}
It should be noted that whether or not the equational theory of signed rationals can 
be finitely 
axiomatized is still an open problem. The case of the reals is much simpler indeed. 
\end{remark}

\section{The meadow of complex numbers}
Rational complex numbers have been studied before in \cite{BT06}. 
In this section, we generalize that approach in order to give a completeness result for  
arbitrary complex meadow terms.

We extend the signature $\Sigma_m$ of  meadows to the signature of complex numbers 
$(\Sigma_m,i, \bar{\ })$ where 
$i$ is the imaginary unit and $\bar{\ }$ is the unary operation of  complex conjugation. 
The  defining  equations  are listed in Table~\ref{conjugation} and 
Table~\ref{absolutevalue}.  
\eqref{CC0}|\eqref{CC8} are the usual equalities for complex numbers and (CC9) ensures 
propagation;  $\SSAV$ expresses the fact that any  sum of squares of absolute values of 
complex numbers cannot yield -1. 
As a special instance of \SSAV\  we obtain the set of axioms \CO\  for meadows of 
characteristic 0 (see also Table \ref{CO}):
\begin{align}
1_{\underline{n+1}}&= 1_{1+ 1\cdot \overline{1} + \cdots + 1\cdot \overline{1}}=1  
&\tag{C0$_n$}
\end{align}

\begin{table}
\centering
\hrule
\begin{align}
	\overline{0} &= 0&\tag{CC0}\label{CC0}\\
	\overline{1} &= 1&\tag{CC1}\label{CC1}\\
	\overline{i} &= -i&\tag{CC2}\label{CC2}\\
	\overline{-x} &= -\overline{x}&\tag{CC3}\label{CC3}\\
	\overline{x+y}     &= \overline{x}+\overline{y}&\tag{CC4}\label{CC4}\\
	\overline{x\cdot y}     &= \overline{x}\cdot \overline{y}&\tag{CC5}\label{CC5}\\
	\overline{x^{-1}} &= \overline{x}^{-1}&\tag{CC6}\label{CC6}\\
	\overline{\overline{x}} &= x&\tag{CC7}\label{CC7}\\
	i\cdot i & =  -1&\tag{CC8}\label{CC8}\\
	1_{\overline{x}}&=1_x&\tag{CC9}\label{CC9}
\end{align}
\hrule
\caption{The set \CC\ of axioms for complex conjugation}
\label{conjugation}
\end{table}

\begin{table}
\centering
\hrule
\begin{align}
1_{1+ x_0\cdot \overline{x_0} + \cdots +x_n\cdot \overline{x_n}}&=1&\tag{SSAV$_n$}\label{AEFRN}
\end{align}
\hrule
\caption{The set $\SSAV$ of axioms for the absolute value of complex numbers}
\label{absolutevalue}
\end{table}

In the sequel we will use the derived unary operators $re(\_)$ and $im(\_)$|the real 
part and the imaginary part of a complex 
number|given in Table \ref{reim}. Here we write $\frac{s}{t}$ for $s\cdot t^{-1}$. 
$re(\_)$ and $im(\_)$ enjoy a couple of nice properties which are listed in Table~\ref{RI}.

\begin{table}
\centering
\hrule
\begin{align}
re(x)&=\frac{1}{\underline{2}}\cdot (x + \overline{x})\notag\\
im(x)&=-\frac{i}{\underline{2}}\cdot (x - \overline{x})\notag
\end{align}
\hrule
\caption{The derived operators $re(\_)$ and $im(\_)$}
\label{reim}
\end{table}

\begin{table}
\centering
\hrule
\begin{align}
	x&= re(x) + im(x) \cdot i&\tag{RI0}\label{RI0}\\
	re(x)&=\overline{re(x)}&\tag{RI1}\label{RI1}\\
	im(x)&=\overline{im(x)}&\tag{RI2}\label{RI2}\\
	re(re(x))&=re(x)&\tag{RI3}\label{RI3}\\
	re(im(x))&=im(x)&\tag{RI4}\label{RI4}\\
	im(re(x))&=0&\tag{RI5}\label{RI5}\\
	im(im(x))&=0&\tag{RI6}\label{RI6}\\
	re(0)&=0 &\tag{RI7}\label{RI7}\\
	re(1)&=1& \tag{RI8}\label{RI8}\\
	re(i)&=0 &\tag{RI9}\label{RI9}\\
	re(-x)&=-re(x) & \tag{RI10}\label{RI10}\\
	re(x+y)&=re(x) + re(y)& \tag{RI11}\label{RI11}\\
	re(x\cdot y)&=re(x)\cdot re(y) - im(x)\cdot im(y)& \tag{RI12}\label{RI12}\\
	re(x^{-1})&=re(x)\cdot (re(x)\cdot re(x) + im(x)\cdot im(x))^{-1} & \tag{RI13}\label{RI13}\\
	re(\overline{x})&=re(x)& \tag{RI14}\label{RI14}\\
	im(0)&=0 &\tag{RI15}\label{RI15}\\
	im(1)&=0& \tag{RI16}\label{RI16}\\
	im(i)&=1 &\tag{RI17}\label{RI17}\\
	im(-x)&=-im(x) & \tag{RI18}\label{RI18}\\
	im(x+y)&=im(x) + im(y)& \tag{RI19}\label{RI19}\\
	im(x\cdot y)&=re(x)\cdot im(y) + im(x)\cdot re(y)& \tag{RI20}\label{RI20}\\
	im(x^{-1})&=-im(x)\cdot (re(x)\cdot re(x) + im(x)\cdot im(x))^{-1} & \tag{RI21}\label{RI21}\\
	im(\overline{x})&=-im(x)& \tag{RI22}\label{RI22}
\end{align}
\hrule
\caption{Properties of  the real part and the imaginary part of complex numbers}
\label{RI}
\end{table}

\begin{proposition}\label{RIi}
For  $i\in \{0, \ldots , 22\}$, 
\[
\Md + \CC + \SSAV \vdash \text{RIi}
\]
\end{proposition}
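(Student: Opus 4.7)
The plan is to verify each of the twenty-three identities by direct equational calculation from the definitions of $re(\_)$ and $im(\_)$ in Table~\ref{reim}, together with the axioms in $\CC$ and the meadow identities already in hand. A preliminary step establishes that the integer $\underline{2}$ is ``genuinely'' invertible: from $\SSAV_0$ and $\overline{1}=1$ (by \eqref{CC1}) we obtain $1_{1+1\cdot\overline{1}}=1_{\underline{2}}=1$, i.e.\ $\underline{2}\cdot\underline{2}^{-1}=1$; together with $(xy)^{-1}=x^{-1}y^{-1}$ this yields $\underline{4}\cdot\underline{2}^{-2}=1$. In addition, \eqref{CC1}, \eqref{CC4} and \eqref{CC6} give $\overline{\underline{2}}=\underline{2}$ and $\overline{\underline{2}^{-1}}=\underline{2}^{-1}$, and \eqref{CC9} yields the companion identity $0_{\overline{x}}=0_x$.

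For the genuinely easy identities I would proceed by pure unfolding. \eqref{RI0} reduces, after substituting the definitions, to $\underline{2}^{-1}(x+\overline{x}) + (-\underline{2}^{-1}i(x-\overline{x}))\cdot i = \underline{2}^{-1}(x+\overline{x}) + \underline{2}^{-1}(x-\overline{x})=x$, using \eqref{CC8} and $\underline{2}\cdot\underline{2}^{-1}=1$. Applying $\overline{\cdot}$ to the definitions, together with \eqref{CC3}, \eqref{CC4}, \eqref{CC7} and the fact that $\overline{\underline{2}^{-1}}=\underline{2}^{-1}$, gives \eqref{RI1} and \eqref{RI2}; the idempotency and cross-vanishing identities \eqref{RI3}--\eqref{RI6} then drop out by substituting \eqref{RI0}--\eqref{RI2} into the definitions. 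The constant and unary cases \eqref{RI7}--\eqref{RI11}, \eqref{RI14}--\eqref{RI19} and \eqref{RI22} are immediate from the definitions and \eqref{CC0}--\eqref{CC3}, \eqref{CC7}. The multiplication identities \eqref{RI12} and \eqref{RI20} fall out by expanding $x\cdot y$ via \eqref{RI0}, multiplying and regrouping using \eqref{CC8}.

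The main obstacle is \eqref{RI13} and \eqref{RI21}, which involve $x^{-1}$ and must hold for $x=0$ as well. The key intermediate identity is
\begin{equation*}
re(x)^2 + im(x)^2 \;=\; x\cdot\overline{x},
\end{equation*}
which follows by expanding the left-hand side, collecting terms via \eqref{CC8}, and using $\underline{4}\cdot\underline{2}^{-2}=1$. Given this, I would rewrite the right-hand side of \eqref{RI13} as $\underline{2}^{-1}(x+\overline{x})\cdot(x\overline{x})^{-1}= \underline{2}^{-1}(x+\overline{x})\cdot x^{-1}\cdot\overline{x}^{-1}$, distribute the product, and then apply the meadow identity $x\cdot x^{-1}\cdot y^{-1}=1_x\cdot y^{-1}$ together with \eqref{CC9} (giving $1_x\cdot\overline{x}^{-1}=1_{\overline{x}}\cdot\overline{x}^{-1}=\overline{x}^{-1}$ by \Ril) to obtain $\underline{2}^{-1}(\overline{x}^{-1}+x^{-1})=re(x^{-1})$, where the last equality uses \eqref{CC6}. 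The identity \eqref{RI21} is handled symmetrically, with the extra minus sign accounted for by the factor $-i/\underline{2}$ in the definition of $im$. The delicate point throughout is the totality of $^{-1}$: the reductions must remain valid when $x=0$, where \eqref{CC9} and the pseudo-one calculus ensure that both sides collapse to $0$ without a case split.
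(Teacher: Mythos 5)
Your proposal is correct and follows essentially the same route as the paper: direct equational unfolding of the definitions of $re$ and $im$, with $\underline{2}\cdot\underline{2}^{-1}=1$ extracted from $\SSAV_0$, and with the crux of \eqref{RI13}/\eqref{RI21} handled via \eqref{CC9} (so that $1_x=1_{\overline{x}}$ and hence $x^{-1}=(x\cdot\overline{x})^{-1}\cdot\overline{x}$) together with the identity $x\cdot\overline{x}=re(x)^2+im(x)^2$. The only difference is cosmetic: the paper first derives the closed form $t^{-1}=(re(t)^2+im(t)^2)^{-1}\cdot(re(t)-im(t)\cdot i)$ and then reads off $re(t^{-1})$ and $im(t^{-1})$, whereas you reduce the right-hand sides of \eqref{RI13} and \eqref{RI21} to $re(x^{-1})$ and $im(x^{-1})$; the underlying manipulations are the same.
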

\begin{proof}
We prove \eqref{RI0}, \eqref{RI1}, \eqref{RI2}, \eqref{RI13} and \eqref{RI21}.
\begin{itemize}
\item (RI0): 
\[
\begin{array}{rcll}
re(x) + im(x)\cdot i&=& \frac{1}{2}\cdot (x +\overline{x})- \frac{i}{\underline{2}}\cdot (x -\overline{x})\cdot i\\[0.1cm]
&=& \frac{1}{\underline{2}}\cdot (x +\overline{x}) +\frac{1}{\underline{2}}\cdot (x -\overline{x}) &\text{ by \eqref{CC8}}  \\[0.1cm]
&=& \frac{1}{\underline{2}}\cdot x +\frac{1}{\underline{2}}\cdot x  \\[0.1cm]
&=&\underline{2}\cdot \underline{2}^{-1} \cdot x \\
&=& x & \text{ by  (C0$_1$)}
\end{array}
\]

\item (RI1):
\[
\begin{array}{rcll}
\overline{re(x)}&=& \overline{\frac{1}{\underline{2}}(x + \overline{x})}\\[0.1cm]
&=& \frac{1}{\underline{2}}\overline{(x + \overline{x})}& \text{ by (CC1), (CC4), (CC5), (CC6)}\\[0.1cm]
&=& \frac{1}{\underline{2}}(\overline{x} + \overline{\overline{x}})& \text{ by (CC4)}\\[0.1cm]
&=& \frac{1}{\underline{2}}(\overline{x} + x)& \text{ by (CC7)}\\[0.1cm]
&=& \frac{1}{\underline{2}}(x+\overline{x})&\\
&=& re(x)
\end{array}
\]
\item (RI2):
\[
\begin{array}{rcll}
\overline{im(x)}&=& \overline{-\frac{i}{\underline{2}}(x - \overline{x})}\\[0.1cm]
&=& \frac{i}{\underline{2}}\overline{(x - \overline{x})}& \text{ by (CC1), (CC2), (CC3), (CC4), (CC5), (CC6)}\\[0.1cm]
&=& \frac{i}{\underline{2}}(\overline{x} - \overline{\overline{x}})& \text{ by (CC3), (CC4)}\\[0.1cm]
&=& \frac{i}{\underline{2}}(\overline{x} - x)& \text{ by (CC7)}\\[0.1cm]
&=&- \frac{i}{\underline{2}}(x-\overline{x})&\\
&=& im(x)
\end{array}
\]
\item (RI13), (RI21): Observe that
\[ 
\begin{array}{rcll}
t^{-1} &=& t^{-1}\cdot 1_{t^{-1}}\\
 &=& t^{-1}\cdot 1_t\\
&=& t^{-1}\cdot 1_{\overline{t}}&\text{ by (CC9)}\\
&=& t^{-1}\cdot \overline{t}^{-1}\cdot \overline{t}&\text{ by (CC5), (CC6)} \\
&=& (t \cdot \overline{t})^{-1}\cdot \overline{t}\\
&=& (re(t)\cdot re(t)+im(t)\cdot im(t))^{-1}\cdot (re(t) - im(t)\cdot i)&\text{ by (RI0), (RI1), (RI2),}\\
&&&\text{ (CC2)}
\end{array}
\]
Thus
\[
\begin{array}{rcll}
re(t^{-1})&=& \frac{1}{\underline{2}}(t^{-1} + \overline{t^{-1}})\\[0.1cm]
&=&\frac{1}{\underline{2}}((re(t)\cdot re(t)+im(t)\cdot im(t))^{-1}\cdot (re(t)-  im(t)\cdot i)\\
&&+(re(t)\cdot re(t)+im(t)\cdot im(t))^{-1}\cdot (re(t) + im(t)\cdot i))&\text{ by (RI1), (RI2),}\\
&&&\hspace{-8mm}\text{ (CC2), (CC3), (CC4)}\\ 
&=& re(t) \cdot (re(t)\cdot re(t)+im(t)\cdot im(t))^{-1}
\end{array}
\]
Likewise
\[
\begin{array}{rcll}
im(t^{-1})&=&- \frac{i}{\underline{2}}(t^{-1} - \overline{t^{-1}})\\[0.1cm]
&=&-\frac{i}{\underline{2}}((re(t)\cdot re(t)+im(t)\cdot im(t))^{-1}\cdot (re(t)-  im(t)\cdot i)\\
&&-(re(t)\cdot re(t)+im(t)\cdot im(t))^{-1}\cdot (re(t) + im(t)\cdot i))&\\
&&\hspace{3 cm}\text{ by (RI1), (RI2), (CC2), (CC3), (CC4)}\\ 
&=& -im(t) \cdot (re(t)\cdot re(t)+im(t)\cdot im(t))^{-1}
\end{array}
\]

\end{itemize}
\end{proof}
The real and the imaginary part of a complex number can be represented by so-called 
\emph{real forms}.
\begin{definition}
The set of \emph{real forms} is defined inductively as follows.
\begin{enumerate}
\item  $0$ and $1$ are real forms,
\item $re(x)$ and $im(x)$ are real forms for every variable $x$,
\item if $s$ and $t$ are real forms then so are $-s$, $s^{-1}$, $s+t$ and $s\cdot t$.
\end{enumerate}

\end{definition}
\begin{lemma}\label{realterms}
For each  $t\in (\Sigma_m, i, \bar{\ })$ there exist real forms  $t_1,t_2$ such that 
\[
\Md + \CC \vdash re(t)=t_1 \text{ and } \Md + \CC  \vdash im(t)=t_2.
\]
\end{lemma}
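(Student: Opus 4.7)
The plan is to prove the lemma by structural induction on the complex term $t$, using the identities collected in Table~\ref{RI} (all of which Proposition~\ref{RIi} has already shown to be derivable) to reduce $re(t)$ and $im(t)$ to expressions in the $re$'s and $im$'s of proper subterms of $t$.

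For the base cases: if $t$ is one of the constants $0$, $1$, or $i$, then (RI7)--(RI9) and (RI15)--(RI17) show that each of $re(t)$ and $im(t)$ equals $0$ or $1$, both of which are real forms by clause (1) of the definition. If $t$ is a variable $x$, then $re(x)$ and $im(x)$ are already real forms by clause (2).

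For the inductive step, each of the five complex-meadow constructors $-s$, $s_1+s_2$, $s_1\cdot s_2$, $s^{-1}$, and $\overline{s}$ comes with a matching pair of identities in Table~\ref{RI}: (RI10)/(RI18) for negation, (RI11)/(RI19) for addition, (RI12)/(RI20) for multiplication, (RI13)/(RI21) for inversion, and (RI14)/(RI22) for conjugation. Each such identity rewrites $re(t)$ (respectively $im(t)$) as a meadow expression built from $+$, $-$, $\cdot$, and $^{-1}$ applied to $re(s_j)$ and $im(s_j)$ for $s_j$ a proper subterm of $t$. By the induction hypothesis, each of these $re(s_j), im(s_j)$ is provably equal to a real form, and clause (3) of the definition (closure under the meadow operations) guarantees that the resulting composite expression is again a real form. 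Reading off $t_1$ and $t_2$ from the two rewrites finishes the step.

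Once the Table~\ref{RI} identities are in hand the induction is essentially a bookkeeping exercise; the only case requiring more than routine substitution is inversion, where (RI13) introduces the factor $(re(s)\cdot re(s) + im(s)\cdot im(s))^{-1}$ on top of the inductive data. Since real forms are explicitly closed under $^{-1}$, however, this presents no actual obstacle. The main point of care is simply to ensure that every time an RI identity is invoked it replaces $re$ or $im$ of a compound term by a combination of $re$'s and $im$'s of \emph{strictly smaller} subterms, so that the induction hypothesis applies; inspecting (RI10)--(RI22) confirms that this is the case.
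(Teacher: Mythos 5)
Your proof is correct and is essentially the argument the paper intends: the paper's own proof of Lemma~\ref{realterms} is the single sentence ``This follows by an easy induction on $t$'', and your structural induction driven by (RI7)--(RI22), with clause (3) of the definition of real forms absorbing the composite expressions (including the extra inverse factor in the (RI13)/(RI21) case), is precisely that induction spelled out. One pedantic remark: Proposition~\ref{RIi} establishes those identities over $\Md+\CC+\SSAV$ (the inversion case rests on (RI0), which uses (C0$_1$)), so strictly speaking your argument gives derivability from $\Md+\CC+\SSAV$ rather than from the bare $\Md+\CC$ claimed in the statement --- but this discrepancy is already latent in the paper itself and is harmless, since the lemma is only ever invoked in the presence of $\SSAV$.
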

\begin{proof}
This follows by an easy induction on $t$.
\end{proof}
Since real and imaginary parts of complex numbers are independent, 
we can interpret ordinary meadow terms as complex terms while retaining provable equality.

\begin{proposition}\label{meadowterms}
Let $s,t$ be  $\Sigma_m$-terms with free variables among $\{x_0, \ldots, x_n,$ $ y_0, \ldots y_n\}$. For fresh variables $\{z_0, \ldots , z_n\}$, 
\[ t^*\equiv t[x_0, \ldots, x_n:=re(z_0), \ldots , re(z_n)][y_0, \ldots, y_n:=im(z_0), \ldots , im(z_n)]\]
and 
\[ s^*\equiv s[x_0, \ldots, x_n:=re(z_0), \ldots , re(z_n)][y_0, \ldots, y_n:=im(z_0), \ldots , im(z_n)]\]
 are real forms such that 
\[
\Md + \AEFR \vdash s=t  \Rightarrow \Md  +\CC + \SSAV \vdash s^* =t^*
\]
Here, we adopt the notation $[v_1, \ldots , v_n:=r_1, \ldots , r_n]$ 
for the substitution $\sigma$ with $\sigma(v_i)=r_i$ 
for $0\leq i\leq n$ and $\sigma(v)=v$ for all $v\not\equiv v_i$, $0\leq i\leq n$.
\end{proposition}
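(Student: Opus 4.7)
The proposition has two parts: that $s^*$ and $t^*$ are real forms, and the implication on derivability. The first part is a routine structural induction on $s$ and $t$: the substitution replaces each variable by $re(z_i)$ or $im(z_i)$ (real forms by clause (2) of the definition), and the meadow operations $-$, $+$, $\cdot$, $^{-1}$ together with the constants $0, 1$ lie in and preserve the class of real forms by clauses (1) and (3).

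For the implication, the technical heart is the following key lemma: for every real form $u$, $\Md + \CC \vdash \overline{u} = u$. I would prove this by structural induction on $u$. The base cases $u = 0, 1$ are immediate from \eqref{CC0}, \eqref{CC1}; the base cases $u = re(x), im(x)$ are \eqref{RI1}, \eqref{RI2} (available in $\Md + \CC$ by Proposition~\ref{RIi}); the inductive cases $u \in \{-v,\, v^{-1},\, v_1 + v_2,\, v_1 \cdot v_2\}$ combine \eqref{CC3}, \eqref{CC4}, \eqref{CC5}, \eqref{CC6} with the induction hypothesis. An immediate corollary is that for any real forms $u_0, \ldots, u_n$, the equation $1_{1 + u_0^2 + \cdots + u_n^2} = 1$ is derivable from $\Md + \CC + \SSAV$: rewrite each $u_i^2$ as $u_i \cdot \overline{u_i}$ via the key lemma, and then apply $\SSAV_n$.

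With this corollary in hand, the implication reduces to a derivation-translation argument. Given a derivation $\Pi$ of $s = t$ from $\Md + \AEFR$, let $V$ be the finite set of variables occurring anywhere in $\Pi$, and define the substitution $\tau$ by $\tau(x_i) = re(z_i)$, $\tau(y_i) = im(z_i)$, and $\tau(v) = re(w_v)$ for each $v \in V \setminus \{x_0, \ldots, x_n, y_0, \ldots, y_n\}$ with $w_v$ a fresh variable. Applying $\tau$ termwise to $\Pi$ yields a derivation of $\tau(s) = \tau(t) = s^* = t^*$ in which every intermediate term is a real form, because all variables there have been replaced by real forms and the meadow operations preserve real-form-ness. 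Each $\Md$-axiom instance remains a valid $\Md$-axiom instance. Each $\AEFR_n$-axiom instance takes the form $1_{1 + u_0^2 + \cdots + u_n^2} = 1$ with $u_i$ real forms, hence is derivable from $\Md + \CC + \SSAV$ by the corollary. Splicing these subderivations in place of the $\AEFR$-axioms produces the desired derivation of $s^* = t^*$ from $\Md + \CC + \SSAV$.

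The main obstacle I foresee is the variable bookkeeping in the translation step: the given substitution must be extended to all variables appearing in $\Pi$ (mapping them to real forms), not only to the free variables of $s$ and $t$, so that every $\AEFR_n$-instance in the translated derivation lies on real forms and thereby falls within the reach of $\SSAV_n$ via the key lemma. Without this extension an $\AEFR_n$-instance could still contain stray variables that are not of the form $re(w)$ or $im(w)$, and the reduction to $\SSAV_n$ would break down.
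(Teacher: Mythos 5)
Your proof is correct, but it takes a genuinely different route from the paper's. The paper argues semantically, by contraposition: from a model $\mathcal{M}$ of $\Md+\CC+\SSAV$ refuting $s^*=t^*$ it carves out the $\Sigma_m$-structure $\mathcal{M}'$ on the carrier $\{re(u)\mid u\in U\}$, checks that this set is closed under the meadow operations (using \eqref{RI3}, \eqref{RI5} and \eqref{RI13} for the inverse), observes that $\mathcal{M}'\not\models s=t$, and verifies $\mathcal{M}'\models\AEFR$ via $re(u)\cdot re(u)=re(u)\cdot\overline{re(u)}$ --- which is precisely the base case \eqref{RI1} of your key lemma --- so that $\Md+\AEFR\not\vdash s=t$. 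You instead argue syntactically, translating a derivation of $s=t$ term by term under a substitution sending \emph{every} variable occurring in it to a real form, and discharging each $\AEFR_n$-instance by the lemma that $\Md+\CC\vdash\overline{u}=u$ for every real form $u$ combined with $\SSAV_n$; that lemma is indeed available already in $\Md+\CC$, since the paper's derivations of \eqref{RI1} and \eqref{RI2} use only the \CC{} axioms. Both arguments are sound. Your route is effective (it produces an explicit derivation) and isolates conjugation-invariance of real forms as the one algebraic fact needed; its cost is the proof-theoretic bookkeeping you rightly flag, plus one point worth making explicit: if the equational calculus permits substitution into \emph{derived} equations rather than only into axioms, you should either normalise the derivation first or run the induction with the hypothesis quantified over all real-form-valued substitutions, so that composing your $\tau$ with an internal substitution still lands on real forms. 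The paper's route avoids all reasoning about derivations at the price of a model construction and the well-definedness checks for $\mathcal{M}'$.
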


\begin{proof}
Suppose $ \Md  +\CC + \SSAV\not \vdash s^* =t^*$. Then there exists a 
$(\Sigma_m, i,\bar{\  })$-structure 
$\mathcal{M}=(U, 0,1, i, -, + ,\cdot , \  ^{-1}, \bar{\ })$ 
with $\mathcal{M}\not\models s^* =t^*$. 
Let 
\[\mathcal{M}'=(U', 0_{\mathcal{M}'},1_{\mathcal{M}'}, -_{\mathcal{M}'}, 
+_{\mathcal{M}'} ,\cdot_{\mathcal{M}'} , \  ^{-1_{\mathcal{M}'}})\]
be the $\Sigma_m$-structure obtained from $\mathcal{M}$ by 
stipulating $U'=\{re(u) \mid u\in U\}$, $0_{\mathcal{M}'}=0$, 
$1_{\mathcal{M}'}=1$, and for $u, u_0,u_1\in U'$,
\[
\begin{array}{rcl}
-_{\mathcal{M}'} (u)&=& -u\\
+_{\mathcal{M}'} (u_0 ,u_1)&=&u_0+u_1\\
\cdot_{\mathcal{M}'} (u_0, u_1)&=&u_0 \cdot u_1\\
(u)^{-1_{\mathcal{M}'}}&=&u^{-1}.
\end{array}
\]
Then $\mathcal{M}'$ is well-defined: 
$0_{\mathcal{M}'}, 1_{\mathcal{M}'} \in U'$ by (RI7), (RI8), and e.g.\ since 
\begin{align*}
re(re(u)^{-1})&= re(re(u))\cdot (re(re(u))\cdot re(re(u)) + im(re(u))\cdot im(re(u)))^{-1}\\
&\hspace{9 cm} \text{ by (RI13)}\\
&=re(u)\cdot (re(u)\cdot re(u) )^{-1} \text{ by (RI3), (RI5)}\\
&= re(u)^{-1}
\end{align*}
it follows that $(u)^{-1_{\mathcal{M}'}}\in U'$ for every $u\in U'$.
Moreover, since $\mathcal{M}\not\models s^* =t^*$, $\mathcal{M}'\not\models s=t$. 
Clearly,  $\mathcal{M}'\models \Md$ and as 
\[
1_{1+ re(x_0)\cdot re(x_0)+ \cdots +re(x_n)\cdot re(x_n)}= 1_{1+ re(x_0)\cdot \overline{re(x_0)}+ \cdots +re(x_n)\cdot \overline{re(x_n)}}=1
\]
also $\mathcal{M}'\models \AEFR$.
Thus $\Md + \AEFR \not \vdash s=t $. 
\end{proof}

We can now apply our previous completeness result in order to prove the 
axiomatization of the complex numbers to be complete.

\begin{theorem}\label{complex}
Let $s,t$ be $(\Sigma_{m}, i, \bar{\ })$-terms. Then 
\[
\Md + CC + \SSAV \vdash s=t \quad  \text{ if and only if  } \quad (\mathbb{C}_0,i,\bar{\ })\models s=t.
\]
\end{theorem}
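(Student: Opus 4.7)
My plan is to reduce the completeness claim for the complex meadow to the completeness result for the formally real meadow (Theorem~\ref{complete1}, or equivalently the $\AEFR$ version via Theorem~\ref{alternative}) by splitting every complex term into its real and imaginary parts. Soundness is routine: one verifies that the standard interpretations of $i$ and $\bar{\ }$ on $\mathbb{C}_0$ satisfy $\CC$ and $\SSAV$. So the work is in completeness.

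Assume $(\mathbb{C}_0, i, \bar{\ }) \models s = t$, with $s,t$ containing variables $z_0,\dots,z_n$. By \eqref{RI0}, which is derivable from $\Md + \CC + \SSAV$ (Proposition~\ref{RIi}), we have $\Md + \CC + \SSAV \vdash u = re(u) + im(u)\cdot i$ for any term $u$. Hence it suffices to prove
\[
\Md + \CC + \SSAV \vdash re(s) = re(t) \quad\text{and}\quad \Md + \CC + \SSAV \vdash im(s) = im(t).
\]
By Lemma~\ref{realterms} there are real forms $r_s, r_t, j_s, j_t$ with $\Md + \CC \vdash re(s) = r_s$, $\Md + \CC \vdash re(t) = r_t$, $\Md + \CC \vdash im(s) = j_s$, $\Md + \CC \vdash im(t) = j_t$.

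Let $\tilde r_s$ be the $\Sigma_m$-term obtained from $r_s$ by replacing each subterm $re(z_k)$ by a fresh variable $x_k$ and each $im(z_k)$ by a fresh variable $y_k$, and define $\tilde r_t, \tilde j_s, \tilde j_t$ analogously. The key semantic observation is that, for any reals $a_k, b_k$, setting $c_k = a_k + b_k \cdot i \in \mathbb{C}$ gives $re(c_k)=a_k$ and $im(c_k)=b_k$, and conversely every complex $c_k$ decomposes this way; moreover, since $r_s$ only uses $\Sigma_m$-operations applied to real inputs, the value of $r_s$ in $\mathbb{C}_0$ under $z_k\mapsto c_k$ coincides with the value of $\tilde r_s$ in $\mathbb{R}_0$ under $x_k\mapsto a_k, y_k\mapsto b_k$. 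Therefore $(\mathbb{C}_0, i, \bar{\ })\models s=t$ forces $\mathbb{R}_0 \models \tilde r_s = \tilde r_t$ and $\mathbb{R}_0 \models \tilde j_s = \tilde j_t$. By Theorem~\ref{complete1} together with Theorem~\ref{alternative}, we conclude $\Md + \AEFR \vdash \tilde r_s = \tilde r_t$ and $\Md + \AEFR \vdash \tilde j_s = \tilde j_t$.

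Now invoke Proposition~\ref{meadowterms}: applied to $\tilde r_s, \tilde r_t$ with fresh variables $z_0,\ldots,z_n$, the starred substitution $[x_k := re(z_k)][y_k := im(z_k)]$ sends $\tilde r_s$ and $\tilde r_t$ back to $r_s$ and $r_t$ respectively. Hence $\Md + \CC + \SSAV \vdash r_s = r_t$, and combining with Lemma~\ref{realterms} yields $\Md + \CC + \SSAV \vdash re(s) = re(t)$. The same argument with $\tilde j_s, \tilde j_t$ gives $\Md + \CC + \SSAV \vdash im(s) = im(t)$. Finally, using \eqref{RI0} on both sides delivers $\Md + \CC + \SSAV \vdash s = t$.

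The main obstacle is the semantic identification in Step~5, i.e.\ ensuring that the bijection between complex assignments $z_k\mapsto c_k$ and pairs of real assignments $(x_k,y_k)\mapsto(re(c_k),im(c_k))$ really respects the interpretation of $\Sigma_m$-operations across the two structures. This amounts to the observation that $\mathbb{R}_0$ sits as a $\Sigma_m$-subalgebra of $\mathbb{C}_0|\Sigma_m$ via the real part embedding, so that evaluating a real-form term on real inputs yields the same element in both meadows; the rest is careful variable bookkeeping when feeding the outputs into Proposition~\ref{meadowterms}.
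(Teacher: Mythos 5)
Your proposal is correct and follows essentially the same route as the paper: soundness is immediate, and completeness is obtained by splitting into real and imaginary parts via Lemma~\ref{realterms} and \eqref{RI0}, transporting the resulting real forms to $\Sigma_m$-terms over fresh variables, applying Theorem~\ref{complete1} together with Theorem~\ref{alternative} in $\mathbb{R}_0$, and pulling the derivations back with Proposition~\ref{meadowterms}. The only cosmetic difference is that the paper works with $s-t=0$ and shows the two real forms are provably $0$, whereas you keep $s$ and $t$ separate and prove $re(s)=re(t)$ and $im(s)=im(t)$; your explicit justification of the semantic identification between complex assignments and pairs of real assignments is a point the paper leaves implicit.
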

\begin{proof}
Soundness is again immediate. 

Assume $(\mathbb{C}_0,i,\bar{\ })\models s=t$. 
Then $(\mathbb{C}_0,i,\bar{\ })\models s-t=0$. By Lemma \ref{realterms} we can pick real 
forms $r_1$, $r_2$ such that 
\[
\Md + CC + \SSAV \vdash re(s-t)=r_1 \text{ and } \Md + CC + \SSAV \vdash im(s-t)=r_2.
\]
Then
\[
(\mathbb{C}_0,i,\bar{\ })\models re(s-t)=r_1 \text{ and } 
(\mathbb{C}_0,i,\bar{\ })\models im(s-t)=r_2.
\]
From (RI7) and (RI15) it follows that 
\[
(\mathbb{C}_0,i,\bar{\ })\models r_1=0 \text{ and }(\mathbb{C}_0,i,\bar{\ })\models r_2=0.
\]
Without loss of generality, we may assume that the free variables of $r_1$ and $r_2$ are 
amongst $\{z_0, \ldots, z_n\}$. Now choose fresh variables 
$\{x_0, \ldots, x_n, y_0, \ldots y_n\}$ and  consider the meadow terms $u_1$, 
$u_2$ obtained from $r_1, r_2$ by replacing occurrences of a subterm of the form 
$re(z_i)$ by the variable $x_i$ and occurrences of subterms of the form 
$im(z_i)$ by the variable $y_i$. Then 
\[
\mathbb{R}_0\models u_1=0 \text{ and }\mathbb{R}_0\models u_2=0
\]
and hence 
\[
\Md + \EFR \vdash u_1=0 \text{ and } \Md + \EFR \vdash u_2=0
\]
by Theorem \ref{complete1}. Whence 
\[
\Md + \AEFR \vdash u_1=0 \text{ and } \Md + \AEFR \vdash u_2=0
\]
by Theorem \ref{alternative}. Since $r_1\equiv u_1^*$ and $r_2\equiv u_2^*$, 
it follows  from Proposition \ref{meadowterms} that 
\[
\Md + CC + \SSAV \vdash r_1=0 \text{ and } \Md + CC + \SSAV \vdash r_2=0.
\]
Therefore 
\[
\Md + CC + \SSAV \vdash s-t = re(s-t) + im(s-t)\cdot i=0,
\]
and thus $\Md + CC + \SSAV \vdash s=t$.
\end{proof}
\begin{remark}
One obtains a finite axiomatization of the complex numbers by replacing \SSAV\   
by the eight axioms \eqref{S1} -- \eqref{S3}, \eqref{S*4} -- \eqref{S*8} for the sign function 
given in Table \ref{add}. Note that $\sg$ differs from the usual generalizations of the sign function on $\mathbb{R}$ to $\mathbb{C}-\{0\}$ given by $sgn(z) = z/|z|$|where $|z|$ is the absolute value of $z$|and
\begin{equation}
\nonumber
csgn(z)=
\begin{cases}
1&\text{if $re(z)>0$},\\
-1&\text{if $re(z)<0$},\\
sgn(im(z))&\text{if $re(z)=0$}.
\end{cases}
\end{equation}
That $\Md + \CC  + \SA^*$ is indeed a complete axiomatization of the meadow of 
complex numbers can be seen by redoing  the proof given above. Observe that (C0$_1$)|which is used in Proposition \ref{RIi}|follows from \SA* by
\[
1_{\underline{2}}=\sg( 1_{\underline{2}})=\sg(re(1))=\sg(1)=1.
\]

\begin{table}
\centering
\hrule
\begin{align}
\sg(1_x)&=1_x&\tag{\ref{S1}}\\
\sg(0_x)&=0_x&\tag{\ref{S2}}\\
\sg(-1)&=-1&\tag{\ref{S3}}\\
\sg(re(x)^{-1})&=\sg(re(x))&\tag{S*4}\label{S*4}\\
\sg(re(x)\cdot re(y))&=\sg(re(x))\cdot \sg(re(y))&\tag{S*5}\label{S*5}\\
0_{\sg(re(x))-\sg(re(y))}\cdot (\sg(re(x)+re(y))-\sg(re(x)))&=0&\tag{S*6}\label{S*6}\\
\sg(x)&=\sg(re(x))&\tag{S*7}\label{S*7}\\
\overline{\sg(x)}&=\sg(x)&\tag{S*8}\label{S*8}
\end{align}
\hrule
\caption{The set \SA*\ of axioms for the sign function for complex numbers}
\label{add}
\end{table}.
\end{remark}

\section*{References}
\bibliographystyle{elsarticle-num}

\begin{thebibliography}{99}

\bibitem{AS27}
Artin, E. and Schreier, O. (1927).
\newblock Algebraische Konstruktion reeller K\"orper. 
In: \emph{Artin's Collected Papers}, eds. S. Lang and J. Tate, 
pages 258--272, Springer.

\bibitem{BBP2013}
Bergstra, J.A.,  Bethke, I., and Ponse, A. (2013).
\newblock Cancellation meadows: a generic basis theorem and some applications.
\newblock {\em The Computer Journal}, 56(1):3--14, 
\texttt{doi:10.1093/comjnl/bsx147}. 
(Also  available at arXiv:0803.3969 [math.RA, cs.LO].)

\bibitem{BHT}
Bergstra, J.A., Hirshfeld, Y., and Tucker, J.V. (2009).
\newblock Meadows and the equational specification of division.
\emph{Theoretical Computer Science}, 410(12-13):1261--1271. 
(Also available at arXiv:0901.0823v1 [math.RA].)

\bibitem{BPvdZ}
Bergstra, J.A., Ponse, A., and Van der Zwaag, M.B. (2007).
\newblock 
Tuplix calculus. 
\emph{Scientific Annals of Computer Science}, 18:35-61.
(Also  available at arXiv:0712.3423v1 [cs.LO].)

\bibitem{BT06}
Bergstra, J.A. and Tucker, J.V. (2006).
\newblock Elementary algebraic specifications of the rational complex numbers. 
In: \emph{Algebra, meaning and computation. 
Essays dedicated to Joseph A. Goguen on the occasion of his 65th birthday},  
eds. K. Futatsugi, J.-P. Jouannaud, J. Meseguer, pages 459--475, 
Springer Lecture Notes in Computer Science 4060, Springer.

\bibitem{BT07}
Bergstra, J.A. and Tucker, J.V. (2007).
\newblock The rational numbers as an abstract data type.
\emph{Journal of the ACM}, 54(2), Article No.\ 7.	 	

\bibitem{BRS09}
Bethke I., Rodenburg, P.H., and Sevenster, A. (2014). 
\newblock The structure of finite meadows. 
To appear in \emph{Journal of Logical and Algebraic Methods in Programming}. doi: 10.1016/j.jlamp.2014.08.004.

\bibitem{CK90}
Chang, C.C. and Keisler, H.J. (1990).
\newblock
\emph{Model Theory (3rd edition)}. North-Holland.

\bibitem{J80}
Jacobson, N. (1980).
\newblock
\emph{Basic Algebra II}. 
\newblock W.H. Freeman and Company.

\bibitem{P00}
Poizat, B. (2000).
\newblock \emph{A Course in Model Theory. An Introduction to Contemporary 
Mathematical Logic}. Springer.

\bibitem{P78}
Prestel, A. (1978).
\newblock Remarks on the Pythagoras and Hasse number of real fields. 
\emph{Journal f\"ur die reine und angewandte Mathematik}, 0303$\underline{\ }$0304:284--294.


\bibitem{S91}
Sander, T. (1991).
\newblock Existence and uniqueness of the real closure of an ordered field without 
Zorn's Lemma. \emph{Journal of Pure and Applied Algebra}, 73:165--180.

\bibitem{T51}
Tarski, A. (1951).
\newblock \emph{A Decision Method for Elementary Algebra and Geometry, 2nd ed.} 
University of California Press, Berkeley, CA.

\end{thebibliography}

\appendix
\renewcommand*{\thesection}{\Alph{section}}

\section{Appendix}
In this appendix we will give a syntactic proof of Proposition~\ref{efr} without 
the use of $\IL$. We obtained this proof from a proof relying on  $\IL$ by applying 
the $\IL$-elimination steps suggested in the proof of Theorem~\ref{st}. 
We first list some properties
of pseudo ones and pseudo zeros in Table~\ref{pc}.
\begin{table}
\centering
\hrule
\begin{align}
0_t\cdot 0_t&=0_t&\tag{PC1}\label{PC1}\\
0_{t^2}&= 0_t&\tag{PC2}\label{PC2}\\
0_t\cdot t&= 0&\tag{PC3}\label{PC3}\\
0_s\cdot 0_{s+t} &= 0_s\cdot 0_t&\tag{PC4}\label{PC4}\\
1_t\cdot 1_t&=1_t&\tag{PC5}\label{PC5}\\
1_{t^2}&=1_t&\tag{PC6}\label{PC6}\\
1_t\cdot t&=t&\tag{PC7}\label{PC7}\\
1_s\cdot 1_t&=1_{s\cdot t}&\tag{PC8}\label{PC8}
\end{align}
\hrule
\caption{Some properties of pseudo constants for $\Sigma_{ms}$-terms $s,t$}
\label{pc}
\end{table}

We derive \eqref{PC4}: Since 
\begin{align*}
0_s \cdot (s+t)
&=0_s\cdot s +0_s\cdot t\\
&= 0_s\cdot t&\text{by~\eqref{PC3}}
\end{align*}
we have 
\begin{align*}
0_s^{-1} \cdot (s+t)^{-1}
&=(0_s \cdot (s+t))^{-1}\\
&= (0_s\cdot t)^{-1}\\
&=0_s^{-1}\cdot t^{-1}
\end{align*}
and hence 
\begin{align*}
0_s\cdot (s+t)^{-1}
&= 0^2_s\cdot 0_s^{-1} \cdot (s+t)^{-1}\\
&=0^2_s\cdot 0_s^{-1}\cdot t^{-1}\\
&=0_s\cdot t^{-1}.
\end{align*}
Thus
\begin{align*}
0_s\cdot 0_{s+t}
&= 0_s - 0_s\cdot (s+t)\cdot (s+t)^{-1}\\
&=0_s - 0_s\cdot t^{-1} \cdot (s+t)\\
&=0_s - 0_s\cdot t \cdot t^{-1}\\
&=0_s\cdot 0_{t}.
\end{align*}
For the remaining identities see~\cite{BBP2013}. Moreover, we have the following 
useful identity.

\begin{lemma}\label{useful}
For all $n\in \mathbb{N}$, 
$\Md  +\SA \vdash 0_{x_0^2 + \cdots + x_n^2}\cdot 1_{x_0\cdot\ \cdots\ \cdot x_n}=0$.
\end{lemma}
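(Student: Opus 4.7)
The plan is to prove the identity by induction on $n$, using the auxiliary identity
\[
1_{x_0\cdot \ \cdots\ \cdot x_n}\cdot \sg(x_0^2 + \cdots + x_n^2) = 1_{x_0\cdot \ \cdots\ \cdot x_n}
\]
as the inductive statement I actually push through, and then combining it at the end with the easy consequence $0_t\cdot \sg(t)=0$ (valid for every term $t$) to obtain the lemma. Write $s_n = x_0^2 + \cdots + x_n^2$ and $p_n = x_0\cdot \ \cdots\ \cdot x_n$.

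\textbf{Step 1: The auxiliary identity $1_{p_n}\cdot \sg(s_n)=1_{p_n}$.} The base case $n=0$ reads $1_{x_0}\cdot \sg(x_0^2)=1_{x_0}$, which is immediate from \eqref{S7} and \eqref{PC5}. For the step, use \eqref{PC8} to split $1_{p_{n+1}}=1_{p_n}\cdot 1_{x_{n+1}}$, and apply the induction hypothesis together with \eqref{S7} to compute
\[
1_{p_{n+1}}\cdot\big(\sg(s_n)-\sg(x_{n+1}^2)\big)=1_{p_{n+1}}-1_{p_{n+1}}=0.
\]
From this vanishing, I derive $1_{p_{n+1}}\cdot 0_{\sg(s_n)-\sg(x_{n+1}^2)}=1_{p_{n+1}}$ by expanding $0_t=1-t\cdot t^{-1}$. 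Then multiplying axiom \eqref{S6} (applied to $x:=s_n$, $y:=x_{n+1}^2$) by $1_{p_{n+1}}$ kills the $0_{\sg(s_n)-\sg(x_{n+1}^2)}$ factor, leaving
\[
1_{p_{n+1}}\cdot\big(\sg(s_{n+1})-\sg(s_n)\big)=0,
\]
and a final application of the induction hypothesis yields $1_{p_{n+1}}\cdot\sg(s_{n+1})=1_{p_{n+1}}\cdot\sg(s_n)=1_{p_{n+1}}$.

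\textbf{Step 2: Combine with $0_{s_n}\cdot\sg(s_n)=0$.} The identity $\sg(0)=0$ follows from \eqref{S2} with $x:=1$ (noting $0_1=0$ via \Ril). Then \eqref{S2}, \eqref{S5} and \eqref{PC3} give
\[
0_{s_n}\cdot\sg(s_n)=\sg(0_{s_n})\cdot\sg(s_n)=\sg(0_{s_n}\cdot s_n)=\sg(0)=0.
\]
Multiplying the identity from Step~1 by $0_{s_n}$ therefore yields $0=0_{s_n}\cdot 1_{p_n}$, as required.

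\textbf{Expected obstacle.} The only nonroutine point is the syntactic elimination of the $0_{\sg(s_n)-\sg(x_{n+1}^2)}$ factor from \eqref{S6} in the inductive step; this is exactly the kind of move that the propagation properties encode, and here I can carry it out by hand because multiplying by $1_{p_{n+1}}$ transforms the difference of signs into $0$, after which the factor $0_{\cdot}$ becomes pseudo-one in the presence of $1_{p_{n+1}}$. Everything else is a direct use of \Md, the pseudo-constant identities in Table~\ref{pc}, and axioms \eqref{S2}, \eqref{S5}, \eqref{S6}, \eqref{S7}.
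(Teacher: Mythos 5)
Your proposal is correct and follows essentially the same route as the paper's proof: the same auxiliary identity $1_{x_0\cdot\,\cdots\,\cdot x_n}\cdot\sg(x_0^2+\cdots+x_n^2)=1_{x_0\cdot\,\cdots\,\cdot x_n}$ proved by induction via \eqref{PC8}, \eqref{S7}, \eqref{PC5} and the elimination of the $0_{\sg(\cdot)-\sg(\cdot)}$ factor from \eqref{S6}, followed by the same concluding computation $0_{s_n}\cdot\sg(s_n)=\sg(0_{s_n}\cdot s_n)=\sg(0)=0$ using \eqref{S2}, \eqref{S5}, \eqref{PC3}. The only (immaterial) difference is which instantiation of \eqref{S6} you use in the inductive step, so that you close with the induction hypothesis where the paper closes with \eqref{S7}.
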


\begin{proof}
We first prove by induction on $n$ that
\begin{align}
\Md +\SA \vdash 1_{x_0\cdot\ \cdots\ \cdot x_n}\cdot \sg(x_0^2 + \cdots + x_n^2)= 
1_{x_0\cdot\   \cdots\  \cdot x_n}&\tag{\dag}
\end{align}
For $n=0$ observe that 
$
1_x \cdot \sg(x^2)=1_x\cdot 1_x=1_x
$
by \eqref{S7} and \eqref{PC5}. Suppose $n=m+1$. By the induction hypothesis we have 
\[
1_{x_0\cdot\ \cdots\ \cdot x_m}\cdot \sg(x_0^2 + \cdots + x_m^2)= 
1_{x_0\cdot\   \cdots\  \cdot x_m}
\]
and thus $1_{x_0\cdot\ \cdots\ \cdot x_{m+1}}\cdot \sg(x_0^2 + \cdots + x_m^2)= 
1_{x_0\cdot\   \cdots\  \cdot  x_{m+1}}$ by \eqref{PC8}.
It follows that 
\begin{align*}
0 &= 1_{x_0\cdot\ \cdots\  \cdot x_{m+1}}\cdot \sg(x_0^2 + \cdots + x_m^2) - 
1_{x_0\cdot\   \cdots\  \cdot x_{m+1}}\\
&= 1_{x_0\cdot\ \cdots\  \cdot x_{m+1}}\cdot \sg(x_0^2 + \cdots + x_m^2) - 
1_{x_0\cdot\   \cdots\  \cdot x_m} \cdot 1_{x_{m+1}}&\text{by \eqref{PC8}}\\
&=1_{x_0\cdot\ \cdots\  \cdot x_{m+1}}\cdot \sg(x_0^2 + 
\cdots + x_m^2) - 1_{x_0\cdot\   \cdots\  \cdot x_m} \cdot 1_{x_{m+1}}
\cdot 1_{x_{m+1}}\!\!\!&\text{by \eqref{PC5}}\\
&=1_{x_0\cdot\ \cdots\  \cdot x_{m+1}}\cdot \sg(x_0^2 + \cdots + x_m^2) - 
1_{x_0\cdot\   \cdots\   \cdot x_{m+1}}\cdot 1_{x_{m+1}}  &\text{by \eqref{PC8}}\\
&=1_{x_0\cdot\ \cdots\  \cdot x_{m+1}}\cdot \sg(x_0^2 + \cdots + x_m^2) - 
1_{x_0\cdot\   \cdots\   \cdot x_{m+1}}\cdot  \sg(x^2_{m+1})& \text{by \eqref{S7}}\\
&=1_{x_0\cdot\ \cdots\ \cdot x_{m+1}}\cdot (\sg(x_0^2 + \cdots + x_m^2) -  
\sg(x^2_{m+1})).
\end{align*}
Therefore
\[
1_{x_0\cdot\ \cdots\  \cdot x_{m+1}}\cdot 0_{\sg(x_0^2 + \cdots + x_m^2) -  
\sg(x^2_{m+1})}= 1_{x_0\cdot\ \cdots\  \cdot x_{m+1}}
\]
and hence by \eqref{S6}, \eqref{S7} and \eqref{PC5}
\begin{align*}
0&=1_{x_0\cdot \ \cdots\ \cdot  x_{m+1}}\cdot 0 \\
&= 1_{x_0\cdot \ \cdots\ \cdot   x_{m+1}} \cdot 0_{\sg (x_0^2+\cdots+ x_m^2) -
\sg(x_{m+1}^2)}  \cdot (\sg(x_0^2  +\cdots + x_{m+1}^2) -\sg(x_{m+1}^2))\\
&= 1_{x_0\cdot \ \cdots\ \cdot   x_{m+1}} \cdot 1_{x_0\cdot \ \cdots\ \cdot   
x_{m+1}}  \cdot (\sg(x_0^2 + \cdots  +x_{m+1}^2)  -1_{x_0\cdot \ \cdots x_{m+1}} )\\
&= 1_{x_0\cdot \ \cdots\ \cdot   x_{m+1}} \cdot  \sg(x_0^2 + \cdots  +x_{m+1}^2)  
-1_{x_0\cdot \ \cdots\ \cdot   x_{m+1}},
\end{align*}
i.e.\  
$1_{x_0\cdot \ \cdots\ \cdot   x_{m+1}} \cdot  \sg(x_0^2 + \cdots  +x_{m+1}^2) =
1_{x_0\cdot \ \cdots\ \cdot   x_{m+1}}$.

We then have
\begin{align*}
0&=0\cdot 1_{x_0\cdot \ \cdots\ \cdot   x_{n}}\\
&= \sg(0)\cdot 1_{x_0\cdot \ \cdots\ \cdot   x_{n}}\\
&=\sg(0_{x^2_0 + \cdots + x^2_n}\cdot (x^2_0 + \cdots + x^2_n))\cdot 1_{x_0
\cdot \ \cdots\ \cdot   x_{n}}& \text{ by \eqref{PC3}}\\
&=0_{x^2_0 + \cdots + x^2_n}\cdot \sg(x^2_0 + \cdots + x^2_n)\cdot 1_{x_0
\cdot \ \cdots\ \cdot   x_{n}}& \text{ by \eqref{S2}, \eqref{S5}}\\
&=0_{x^2_0 + \cdots + x^2_n}\cdot 1_{x_0\cdot \ \cdots\ \cdot   x_{n}}& 
\text{ by $(\dag)$.}
\end{align*}
\end{proof}

We now prove:

\begin{rev} 
For every $n\geq 0$, $\Md + \SA \vdash \EFR_n$.
\end{rev}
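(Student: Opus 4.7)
The plan is to prove $\EFR_n$ by induction on $n$, treating Lemma~\ref{useful} as the syntactic surrogate of the ``all variables non-zero'' case that required \IL\ in the semantic argument. The base case $n=0$ is immediate: $0_{x_0^2} \cdot x_0$ collapses to $0_{x_0} \cdot x_0$ via \eqref{PC2}, which is $0$ by \eqref{PC3}.

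For the inductive step at $n = m+1$, the key move is to expand $x_0$ using the identity $1 = \prod_{i=0}^{n}(1_{x_i} + 0_{x_i})$, which after distribution writes $x_0$ as a sum indexed by subsets $S \subseteq \{0, \ldots, n\}$ of terms $\bigl(\prod_{i \in S} 0_{x_i}\bigr) \cdot \bigl(\prod_{i \notin S} 1_{x_i}\bigr) \cdot x_0$. Multiplying through by $0_{x_0^2 + \cdots + x_n^2}$, each summand is forced to $0$ by one of three mechanisms. If $0 \in S$, the factor $0_{x_0} \cdot x_0$ vanishes by \eqref{PC3}. If $S = \emptyset$, iterated use of \eqref{PC8} collapses $\prod_{i} 1_{x_i}$ to $1_{x_0 \cdot x_1 \cdots x_n}$, and the resulting summand $0_{x_0^2 + \cdots + x_n^2} \cdot 1_{x_0 \cdots x_n} \cdot x_0$ vanishes by Lemma~\ref{useful}. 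If $0 \notin S$ but $S \ne \emptyset$, pick some $j \in S$: then $0_{x_j} = 0_{x_j^2}$ (by \eqref{PC2}) is among the factors, and \eqref{PC4}, applied after reordering the sum inside the outer pseudo-zero so that $x_j^2$ appears first, gives $0_{x_j^2} \cdot 0_{x_0^2 + \cdots + x_n^2} = 0_{x_j^2} \cdot 0_{\sum_{i \ne j} x_i^2}$; the induction hypothesis, instantiated by substituting $x_0, x_1, \ldots, \widehat{x_j}, \ldots, x_n$ for the $m+1$ variables of $\EFR_m$, then yields $0_{\sum_{i \ne j} x_i^2} \cdot x_0 = 0$, killing this branch.

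The main obstacle is not conceptual but bookkeeping: spelling out the $2^{n+1}$-fold expansion explicitly is cumbersome. In the actual write-up I would therefore iterate the two-way split $1 = 1_{x_k} + 0_{x_k}$ one index at a time (say descending from $k = n$), immediately disposing of each $0_{x_k}$-branch by \eqref{PC2} and \eqref{PC4} together with the induction hypothesis, while merging successive $1_{x_k}$-factors via \eqref{PC8} until the fully merged $1_{x_0 \cdot x_1 \cdots x_n}$ appears and Lemma~\ref{useful} closes the argument. A minor point worth flagging is that applying \eqref{PC4} in the third case uses commutativity and associativity of $+$ to bring $x_j^2$ to the front of the sum, which is justified by \Md.
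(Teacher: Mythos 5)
Your proposal is correct and takes essentially the same route as the paper's own proof: the same induction on $n$, the same elimination of the $0_{x_i}$-branches via (PC2), (PC3), (PC4) and the induction hypothesis, the same merging of pseudo-ones via (PC8), and the same final appeal to Lemma~\ref{useful}. The subset-indexed expansion is only a repackaging of the paper's one-index-at-a-time splitting, which you yourself adopt for the actual write-up.
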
 

\begin{proof}
We prove $\EFR_n$ by induction on $n$. For $n=0$ observe that 
$0_{x^2}\cdot x=0_x\cdot x =0$ by \eqref{PC2} and \eqref{PC3}.
For $n=m+1$ assume that $\Md +\SA \vdash 0_{y_0^2 + \cdots + y_m^2} \cdot y_0 = 0$ 
(IH). Then for all $0\leq i\leq m+1$
\begin{align}
0_{x_0^2 + \cdots + x_{m+1}^2} \cdot 0_{x_i}\cdot x_0 &= 0:&\tag{\ddag}
\end{align}
for $i=0$, this follows from $0_{x_0}\cdot x_0=0$ \eqref{PC3}; for $1\leq i\leq m+1$ 
we have  
\begin{align*}
0_{x_0^2 + \cdots + x_{m+1}^2} \cdot  0_{x_i} \cdot x_0
&= 0_{x_0^2 + 
\cdots + x_{m+1}^2} \cdot  0_{x_i^2}\cdot x_0&\text{by \eqref{PC2}}\\
&= 0_{x_i^2}\cdot  0_{x_0^2 + \cdots + x_{m+1}^2}\cdot x_0 \\
&= 0_{x_i^2}\cdot  0_{x_0^2 + \cdots + x_{i-1}^2 + x_{i+1}^2 + \cdots 
 + x_{m+1}^2+ x_i^2}\cdot x_0\!\!\!\\
&= 0_{x_i^2}\cdot  0_{x_0^2 + \cdots + x_{i-1}^2 + x_{i+1}^2 + \cdots 
 + x_{m+1}^2}\cdot x_0&\text{by \eqref{PC4}}\\
 &=0&\text{by (IH)}.
\end{align*}
Thus
 \begin{align*}
 &0_{x_0^2+\cdots + x_{m+1}^2}\cdot  x_0\\
 &=  0_{x_0^2 + \cdots + x_{m+1}^2} 
 \cdot (1_{x_0} + 0_{x_0}) \cdot x_0\\
 &=0_{x_0^2 + \cdots + x_{m+1}^2} \cdot 1_{x_0} \cdot x_0&\text{by ($\ddag$)}\\
 &= 0_{x_0^2 + \cdots + x_{m+1}^2} \cdot 1_{x_0}\cdot (1_{x_1} + 0_{x_1}) 
 \cdot x_0\\
 &= 0_{x_0^2 + \cdots + x_{m+1}^2} \cdot 1_{x_0\cdot x_1}  \cdot x_0
 &\text{by ($\ddag$) and \eqref{PC8}}\\
 &\vdots\\
 &=  0_{x_0^2 + \cdots + x_{m+1}^2} \cdot 1_{x_0\cdot\  \cdots\ \cdot x_m}\cdot 
 (1_{x_{m+1}} + 0_{x_{m+1}}) \cdot x_0\!\!\!\\ 
 &=  0_{x_0^2 + \cdots + x_{m+1}^2} \cdot 1_{x_0\cdot \ \cdots \ \cdot x_{m+1}}  
 \cdot x_0&\text{by ($\ddag$) and \eqref{PC8}}.
 \end{align*}
We may therefore conclude that 
\begin{align*}
&0_{x_0^2+\cdots  +x_{m+1}^2}\cdot x_0\\
&= 0_{x_0^2 + \cdots  
+x_{m+1}^2}\cdot(1_{x_0\cdot\ \cdots\  \cdot\ x_{m+1}} + 0_{x_0 \cdot\ 
\cdots\  \cdot  x_{m+1}})\cdot x_0\\
&=0_{x_0^2 + \cdots  +x_{m+1}^2}\cdot 1_{x_0\cdot\ \cdots\ \cdot\  x_{m+1}}
\cdot x_0 + 0_{x_0^2 + \cdots  +x_{m+1}^2}\cdot 0_{x_0\cdot\ \cdots\  
\cdot\ x_{m+1}} \cdot x_0 \\
&= 0_{x_0^2 + \cdots  +x_{m+1}^2}\cdot 0_{x_0\cdot\ \cdots\  \cdot\ x_{m+1}} 
\cdot x_0\hspace{26mm}\text{by Lemma~\ref{useful}}\\
&=0_{x_0^2 + \cdots  +x_{m+1}^2}\cdot 1_{x_0\cdot\ \cdots\  \cdot\ x_{m+1}} 
\cdot 0_{x_0\cdot\ \cdots\  \cdot\ x_{m+1}} \cdot x_0\hspace{4mm}\text{by the above 
identity} \\ 
&=0 \hspace{70.4mm} \text{ since $1_t\cdot 0_t=0$}.
\end{align*}
\end{proof}

\end{document}